\documentclass [11pt]{article}

\usepackage{amssymb}
\usepackage{amsmath}
\usepackage{amsfonts,mathrsfs}
\usepackage{amsthm}
\usepackage{srcltx}
\usepackage{epsfig}
\usepackage[dvipsnames,usenames]{color}
\usepackage{paralist}


\setcounter{page}{1} \setcounter{section}{0}
\setcounter{subsection}{0}



\setlength{\topmargin}{-.2cm} \setlength{\oddsidemargin}{-.0cm}
\setlength{\textheight}{8.6 in} \setlength{\textwidth}{16.4truecm }



\newcounter{contador}
\newtheorem{propo}[contador]{Proposition}
\newtheorem{teo}[contador]{Theorem}
\newtheorem{lem}[contador]{Lemma}

\newcounter{teoremaganso}









\newcommand{\R}{{\mathbb R}}

\newcommand{\N}{{\mathbb N}}
\newcommand{\Q}{{\mathbb Q}}



%

\title{Periodic points of a Landen transformation}

\author{Armengol Gasull$^{(1)}$, Mireia Llorens$^{(1)}$ and V\'{\i}ctor Ma\~{n}osa $^{(2)}$
  \\*[.1truecm]
{\small \textsl{$^{(1)}$ Departament de Matem\`{a}tiques, Facultat
de Ci\`{e}ncies,}}
\\*[-.25truecm] {\small \textsl{Universitat Aut\`{o}noma de Barcelona,}}
\\*[-.25truecm] {\small \textsl{08193 Bellaterra, Barcelona, Spain}}
\\*[-.25truecm] {\small \textsl{mllorens@mat.uab.cat,
gasull@mat.uab.cat}}
\\*[-.1truecm] {\small \textsl{$^{(2)}$ Departament de Matem\`{a}tiques}}
\\*[-.25truecm] {\small \textsl{Universitat Polit\`{e}cnica de Catalunya}}
\\*[-.25truecm] {\small \textsl{Colom 1, 08222 Terrassa, Spain}}
\\*[-.25truecm] {\small \textsl{victor.manosa@upc.edu}}}
\begin{document}

\maketitle
\begin{abstract} We prove the existence of 3-periodic orbits  in a dynamical
  system associated to a Landen transformation previously studied by Boros,
  Chamberland and Moll, disproving a conjecture on the dynamics of
  this planar map introduced by the latter author.
To this end we present a systematic methodology to determine and
locate analytically
 isolated periodic points of algebraic maps. This approach can be
  useful to study other discrete dynamical systems with algebraic nature.  Complementary results
  on the dynamics of the map associated with the Landen transformation are also presented.
\end{abstract}

\noindent {\sl  Mathematics Subject Classification 2010:} 37C25, 33E05.

\noindent {\sl Keywords: Landen transformation, Periodic points,
Poincar\'{e}-Miranda theorem.}

\section{Introduction}

Given a definite integral depending on several parameters, a
\emph{Landen transformation} is a map on these parameters that
leaves invariant the integral. In   \cite{BM0,BM}, G.~Boros and
 V.~Moll introduced the dynamical system given by
$$
  \left\{ \begin{array}{l}
  a_{n+1}=\dfrac{5a_n+5b_n+a_nb_n+9}{(a_n+b_n+2)^{4/3}},\quad
  b_{n+1}=\dfrac{a_n+b_n+6}{(a_n+b_n+2)^{2/3}},\\
    \ \\
  c_{n+1}=\dfrac{d_n+e_n+c_n}{(a_n+b_n+2)^{2/3}},\quad
d_{n+1}=\dfrac{(b_n+3)c_n+(a_n+3)e_n+2d_n}{a_n+b_n+2},\quad
  e_{n+1}=\dfrac{c_n+e_n}{(a_n+b_n+2)^{1/3}},\\
  \end{array} \right.
$$
as a Landen transformation associated to the integral
 \begin{equation}\label{eq6}
 I(a,b,c,d,e)=\int_{0}^{\infty} {\frac{cx^4+dx^2+e}
 {x^6+ax^4+bx^2+1}\mathrm{d}x},
 \end{equation}
that is,
 $I(a_{n+1},b_{n+1},c_{n+1},d_{n+1},e_{n+1})=I(a_{n},b_{n},c_{n},d_{n},e_{n})$.
 This dynamical system contains a 2-dimensional uncoupled subsystem.  M.~Chamberland and
 V.~Moll in \cite{chamb}, related the convergence of the integral~\eqref{eq6} with the dynamics
 given  by the iteration of the planar, non invertible map
 associated to it:
$$
  G(a,b):=\left(\frac{5a+5b+ab+9}{(a+b+2)^{4/3}},
  \frac{a+b+6}{(a+b+2)^{2/3}}\right).
$$
In particular they proved that the map $G$ has only three fixed points,
characterizing  their nature, and they also proved that  the region of the $(a,b)$-plane where the
  integral \eqref{eq6} converges is the  basin of attraction of the fixed  point $(3,3)$.

In Section \ref{S-Landen} we will give a brief description of the known results about
 the dynamics of the map $G$. In Section
  \ref{S-fixos-i-periodics} we prove our main result,

\begin{teo}\label{T-main} Consider the map $G$.
  \begin{enumerate}
    \item[(a)] It has exactly three fixed points. A super-attracting point in
                $(3,3)$, an oscillatory saddle in the boundary of
                the basin of attraction of $(3,3)$ and an unstable focus.
    \item[(b)] It has  not periodic points with minimal period  $2$.
    \item[(c)] It has exactly twelve periodic points of minimal period\, $3$, that correspond with four $3$-periodic orbits.
  \end{enumerate}
 \end{teo}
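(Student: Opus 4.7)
For part (a), the existence of exactly three fixed points is already contained in the results of Chamberland and Moll recalled in the previous section, so we would only need to classify their nature. We compute the Jacobian $DG$ at each of them: at $(3,3)$ one expects $DG(3,3)=0$, giving a super-attracting fixed point; at the second fixed point one expects a hyperbolic saddle with eigenvalue signs that force oscillatory behaviour; at the third, a complex conjugate pair of modulus greater than one. Each verification is an elementary linear algebra computation.

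The core of the argument, underlying both (b) and (c), is a single observation: the map $G$ becomes algebraic once we introduce cube roots of $a+b+2$ as new variables. For each iterate we set $s_i=(a_{i-1}+b_{i-1}+2)^{1/3}$ together with the constraint $s_i^3=a_{i-1}+b_{i-1}+2$, and clearing denominators turns the equation $G^k(a,b)=(a,b)$ into a polynomial system in $(a,b,s_1,\dots,s_k)$. For part (b) we would carry this out with $k=2$: the variety $G^2(a,b)=(a,b)$ contains the fixed-point variety as a component, and after dividing it out we eliminate $s_1,s_2$ via resultants or a Gr\"obner basis, reducing to a polynomial system in $(a,b)$ whose real zero set can then be shown to be empty by a Sturm-sequence argument or by exhibiting an explicit sum-of-squares certificate.

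For part (c) we would apply the same strategy with $k=3$. The auxiliary system now involves variables $(a,b,s_1,s_2,s_3)$ and enjoys a cyclic symmetry coming from the permutation of the points of a $3$-cycle, which we would exploit to simplify the elimination. After removing the fixed-point component from the ideal, we expect to obtain a univariate polynomial $P$ whose real roots parametrize the $3$-periodic orbits. Each of the twelve periodic points is then rigorously located by applying the Poincar\'{e}-Miranda theorem to a small explicit rectangle on which we can check the required sign conditions on the components of $G^3(a,b)-(a,b)$, and exhaustiveness is established by counting the real roots of $P$ via a Sturm sequence or an interval-based root-counting method.

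The principal obstacle is the combinatorial explosion of the polynomial system for $G^3=\mathrm{Id}$: the elimination step is expected to produce polynomials of very high degree and with large integer coefficients, so that both the precise location of the roots for Poincar\'{e}-Miranda and the certification that no further real roots exist will require a delicate combination of symbolic computation with the cyclic symmetry of the $3$-cycle and rigorous interval arithmetic. This is exactly the \emph{systematic methodology} advertised in the abstract, and the technical heart of the paper will be the successful execution of these reductions for the specific map $G$.
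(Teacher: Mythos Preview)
Your plan is essentially the paper's own strategy: algebraize via cube-root auxiliary variables, eliminate to a univariate polynomial, count real roots by Sturm, and certify existence box-by-box via Poincar\'e--Miranda. Two execution points are worth flagging. First, the paper carries out Poincar\'e--Miranda not on $G^3(a,b)-(a,b)$ in the $(a,b)$-plane (which is non-polynomial) but on the polynomial system $(d_{10},d_{11},d_{12})$ in the auxiliary variables $(m,n,r)$; the discard step and the sign checks are all done there. Second, and more subtly, the paper finds that the Poincar\'e--Miranda sign hypotheses fail for $f=(d_{10},d_{11},d_{12})$ as written, and the remedy is to replace $f$ by $(\mathrm{D}f(\widehat p))^{-1}f$ for a rational approximate zero $\widehat p$, which straightens the level sets enough for the theorem to apply; you should anticipate needing the same trick. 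Finally, rather than algebraically factoring out the fixed-point locus from the ideal, the paper simply identifies the boxes containing fixed points after the discard procedure and removes them by hand.
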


For completeness we include in its statement the results about fixed
points already proved in~\cite{chamb}. In fact, in that paper   it
is also proved that there are no periodic points with minimal
period~$2$ above the line $a+b+2=0$. Our statement $(b)$ extends
their result to the whole plane.

 As we will see, item $(c)$
  disproves  a conjecture about the dynamics of this map, see \cite[Conj. 15.6.3]{llibre-Moll} or Section
  \ref{S-Landen}. We will also determine analytically the location of the $3$-periodic orbits.

Although it is easy to find $3$-periodic points  numerically, when
trying to prove their existence there appear important computational
obstacles. Thus, to prove the existence of $3$-periodic points of
$G,$ as well as the non-existence of $2$-periodic points, we have
developed a procedure to determine analytically the number of
isolated periodic points of discrete dynamical systems of algebraic
nature and locate them with a prescribed precision. This method
consists in the following four steps:

\begin{itemize}

\item  Convert the problem into an algebraic one, characterizing
the periodic points as the solutions of a system of polynomial
equations.

\item  Include these solutions into the ones of an uncoupled system
of equations given by one-variable polynomials.

\item Combine an algorithm based on the \emph{Sturm's method} for
isolating the
 real roots of a one-variable polynomial with a \emph{discard procedure} for
 systems of polynomial equations in order to efficiently remove those solutions
 of the later system that do not correspond with the periodic
 points.

 \item  The
 application of the  \emph{Poincar\'e-Miranda theorem} to prove that the non discarded
 solutions are actual solutions of the first system of polynomial equations
 and, in consequence, give rise to  periodic points.

\end{itemize}

  This procedure is explained in detail in
 next section. Recall that the Poincar\'{e}-Miranda
 theorem is essentially the extension of Bolzano theorem to higher
 dimensions. It was stated by H.~Poincar\'e in 1883 and 1884, and proved by
himself in 1886, \cite{Poinc1,Poinc3}. In 1940, C.~Miranda
re-obtained the result as an equivalent formulation of Brouwer fixed
point theorem, \cite{Miranda}. Recent proofs are presented in
\cite{K,V}. We also recall this theorem in Section
\ref{Ss-metode-general}.

As a complement, in Section \ref{S-Num-An} we characterize the
stable set associated to the fixed point of $G$ of saddle type, and
we provide an analytic-numeric study that gives evidences of the
existence of homoclinic trajectories associated to it, as well as of
the existence of some points in the intersection of the unstable set
of this fixed point and the non-definition set of the map, which
recall that it is formed by all the preimages of the straight line
$a+b+2=0.$

\section{Determination of periodic points of discrete dynamical systems}\label{Ss-metode-general}

We consider a discrete dynamical system defined by a map
$F:\mathcal{U}\subseteq\R^k\rightarrow\mathcal{U}$ where
$\mathcal{U}$ is an open set. Fix $p\in\N$ and assume that it has
finitely many $p$-periodic points. These points are characterized by
the real solutions of the system of $k$ equations given by
$F^p=\mathrm{Id}$. Let us suppose that the solutions of the above
system  are in correspondence with the ones of a new system of
$n\geq k$ non-trivial \emph{polynomial} equations given by
 \begin{equation}\label{desc1}
 \left\{ \begin{array}{c}
 f_1(\mathbf{x})=0,\, f_2(\mathbf{x})=0,\,
 \cdots,\,
 f_n(\mathbf{x})=0,
       \end{array} \right.
 \end{equation}
  where $\mathbf{x}=(x_1,\ldots,x_n)$ are not necessarily the $k$-independent variables of $F.$
   Suppose also
  that using some algebraic transformations, like for instance successive resultants between the given equations, we reach an
   uncoupled polynomial system whose set of solutions  \emph{contains} all the solutions
of system~\eqref{desc1}:
 \begin{equation}\label{desc2}
 \left\{ \begin{array}{c}
 q_1(x_1)=0,\,
 q_2(x_2)=0,\,
 \cdots,\,
 q_n(x_n)=0.
       \end{array} \right.
 \end{equation}

To clarify with an example the above situation we sketch  here the
systems involved in the computation of the
 $3$-periodic points of the map $G.$ The $k=2$ equations corresponding to $G^3(a,b)=(a,b)$
can be  transformed into a new  system of $n=3$ polynomial equations
(see system \eqref{E-d10d11d12}) in the new variables $m,n,r$ given
 by \eqref{E-abcdef}. This new system plays the role of
system \eqref{desc1}, and its solutions are in correspondence with
the periodic points, by forthcoming Lemma~\ref{O-1}.
Lemma~\ref{L-lem2} will show that the solutions of system
\eqref{E-d10d11d12} are included in the set of solutions of the
uncoupled system $\{d_{17}(m)=d_{17}(n)=d_{17}(r)=0\}$, where
$d_{17}$ is a polynomial of degree 371 introduced in~\eqref{eq:d17}.
This system plays the role of system~\eqref{desc2}.

In this setting, the proposed methodology applies in the cases where
we do not know how to obtain explicitly the solutions of systems
\eqref{desc1} or \eqref{desc2} and follows the next steps:

\smallskip

\noindent \textbf{Step 1:} By using an algorithm based on the
Sturm's method   (\cite[Chap. 5.6]{StB}) and for each polynomial
$q_j$, it is possible to isolate and count all its  real roots by
finding intervals with preset maximum length and rational ends, each
one of them containing only one isolated root.  For each
$j=1,2,\ldots,n,$ let $k_j$ be the number of real roots of $q_j,$
without counting their multiplicities, and denote by
$I_{j,m}:=[u_{j,m},v_{j,m}],$ $m=1,2,\ldots,k_j$ the found
intervals, such that each one of them contains exactly one of these
roots. Proceeding in this way we obtain that the set of solutions of
system \eqref{desc1} \emph{is contained} in the set formed by
$\prod _{j=1}^n k_j$  boxes ($n$-dimensional orthohedrons), of the form
$$\mathcal{I}_{m_1,\ldots,m_n}:=I_{1,m_1}\times I_{2,m_2}\times \cdots \times I_{n,m_n},$$ where each $m_j\in\{1,\ldots,k_j\}$, for $j=1,\ldots,n$.

\smallskip

\noindent \textbf{Step 2:} In order to detect those boxes that do
not contain any solution of system \eqref{desc1} we apply a
\emph{discard procedure} to each box $\mathcal{I}_{m_1,\ldots,m_n}.$ This procedure is
inspired in a technique used in~\cite{JD2}. To prove that a certain
polynomial
 $P(\mathbf{x})$ has no zeros in a given box $\mathcal{I}_{m_1,\ldots,m_n},$ that for the sake of simplicity
 we denote as $\mathcal{I},$  we proceed as follows:

  \begin{itemize}

\item We numerically evaluate $P$ at the center of $\mathcal{I}$. If,
compared with the working precision, this value is far from zero, we
suspect that $P$ restricted to $\mathcal{I}$ has a given sign. According
whether this value is positive or negative we continue with one of
next two steps.

  \item For trying to prove that $P(\mathbf{x})>0$ for all
        $\mathbf{x}\in   \mathcal{I}$, we search a $L$ such that
        $0<L<P(\mathbf{x})$ on $\mathcal{I}.$ Write
        $P(\mathbf{x})=\sum_\ell M_\ell(\mathbf{x})$  where $M_\ell(\mathbf{x})=a_\ell x_1^{\ell_1} x_2^{\ell_2}\cdots x_n^{\ell_n} $,
         we find $\underline{M}_{\,\ell}\in \mathbb{R}$ such that
        $\underline{M}_{\,\ell}<M_\ell(\mathbf{x})$ for all  $\mathbf{x}\in  \mathcal{I}$ (this can be done using
         the formulas in forthcoming Lemma \ref{L-minmaj-nou-n}). If the following condition is satisfied:
        $0<L:=\sum_\ell \underline{M}_{\,\ell}<\sum_\ell
        M_\ell(\mathbf{x})=P(\mathbf{x}),$
        then we can discard the box  $\mathcal{I}$.
  \item For trying to  prove that $P(\mathbf{x})<0$ for all
        $\mathbf{x}\in   \mathcal{I}$,  we look for  $U\in\R$  such that
        $P(\mathbf{x})<U<0$ on~$\mathcal{I}.$ To do this, similarly than in the previous situation,
         we find  $\overline{M}_{\,\ell}\in \mathbb{R}$ such that
        $M_\ell(\mathbf{x})<\overline{M}_{\,\ell}$ for all $\mathbf{x}\in   \mathcal{I}$. If
        it holds that
        $P(\mathbf{x})=\sum_\ell M_\ell(\mathbf{x})<\sum_\ell
        \overline{M}_{\,\ell}=:U<0,$
        then we can discard the box $\mathcal{I}$.
 \end{itemize}

To compute the bounds  $\underline{M}_{\,\ell}$ and
 $\overline{M}_{\,\ell}$, we use the following straightforward result, which can be easily implemented in any computer algebra software.

\begin{lem}\label{L-minmaj-nou-n} Consider
$P(\mathbf{x})=\sum_\ell M_\ell(\mathbf{x})$  where
$M_\ell(\mathbf{x})=a_\ell x_1^{\ell_1} x_2^{\ell_2}\cdots
x_n^{\ell_n} $, and a box $ \mathcal{I}=[u_1,v_1]\times
[u_2,v_2]\times\cdots\times [u_n,v_n].$ Set
$O^{+}=\{(x_1,\ldots,x_n),$  such that $x_i>0$ for all
 $i=1,\ldots,n\}$. Then,

\begin{enumerate}[(i)]
\item  If $ \mathcal{I}\subset O^{+}\subset\R^n,$ then for all $\mathbf{x}\in
\mathcal{I},$ $\sum _\ell   \underline{M}_{\,\ell}\le P(\mathbf{x}) \le \sum
_\ell
  \overline{M}_{\ell},$ where
\begin{enumerate}
\item[(a)] $\underline{M}_{\,\ell}=a_\ell\,u_1^{\ell_1} u_2^{\ell_2}\cdots u_n^{\ell_n}$
 and $\overline{M}_\ell=a_\ell\,v_1^{\ell_1} v_2^{\ell_2}\cdots v_n^{\ell_n}$ if  $a_\ell>0$.
\item[(b)] $\underline{M}_{\,\ell}=a_\ell\,v_1^{\ell_1} v_2^{\ell_2}\cdots v_n^{\ell_n}$
 and $\overline{M}_{\ell}=a_\ell\,u_1^{\ell_1} u_2^{\ell_2}\cdots u_n^{\ell_n}$ if  $a_\ell<0$.
\end{enumerate}

\item If $\mathcal{I}\not\subset O^{+}$ we can always  take a number $\xi>0$,
$\xi\in\mathbb{Q}$ such that the new box
$\widetilde{\mathcal{I}}=[u_1+\xi,v_1+\xi]\times
[u_2+\xi,v_2+\xi]\times\cdots\times [u_n+\xi,v_n+\xi]\subset O^{+},
$ and then find bounds for $P$ on $\mathcal{I},$ using the bounds given in
item $(i)$ for $\widetilde P_\xi(x_1,x_2,\ldots,x_n):=
P(x_1-\xi,x_2-\xi,\ldots,x_n-\xi)$ on  $\widetilde{\mathcal{I}}.$
\end{enumerate}
\end{lem}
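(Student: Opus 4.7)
The plan is to establish part (i) by exploiting the coordinatewise monotonicity of pure power functions on the positive real axis, and then reduce part (ii) to part (i) through a straightforward translation.

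For part (i), I would fix a single monomial $M_\ell(\mathbf{x}) = a_\ell x_1^{\ell_1} x_2^{\ell_2} \cdots x_n^{\ell_n}$ and argue on its restriction to $\mathcal{I}$. Since $\mathcal{I} \subset O^+$, each variable $x_i$ ranges over $[u_i,v_i]$ with $0 < u_i \leq v_i$, and because $\ell_i \in \mathbb{N} \cup \{0\}$, the one-variable function $x_i \mapsto x_i^{\ell_i}$ is nonnegative and nondecreasing on $[u_i,v_i]$. Consequently the product $\prod_{i=1}^n x_i^{\ell_i}$, as a function of $(x_1,\ldots,x_n) \in \mathcal{I}$, attains its minimum at the corner $(u_1,\ldots,u_n)$ and its maximum at $(v_1,\ldots,v_n)$. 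Multiplying by the real scalar $a_\ell$ preserves the location of the extrema when $a_\ell > 0$ and interchanges them when $a_\ell < 0$; this is exactly the definition of $\underline{M}_{\,\ell}$ and $\overline{M}_\ell$ in cases (a) and (b). Summing the pointwise inequalities $\underline{M}_{\,\ell} \leq M_\ell(\mathbf{x}) \leq \overline{M}_\ell$ over $\ell$ yields $\sum_\ell \underline{M}_{\,\ell} \leq P(\mathbf{x}) \leq \sum_\ell \overline{M}_\ell$ for every $\mathbf{x} \in \mathcal{I}$, as claimed.

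For part (ii), the key observation is that the existence of a suitable rational translation is trivial: taking any $\xi \in \mathbb{Q}$ with $\xi > -\min_i u_i$ places $\widetilde{\mathcal{I}} = \prod_i [u_i + \xi, v_i + \xi]$ inside $O^+$. Defining $\widetilde{P}_\xi(\mathbf{y}) := P(y_1 - \xi, \ldots, y_n - \xi)$, which is again a polynomial and hence can be expanded as a sum of monomials in $\mathbf{y}$, we have the identity $P(\mathbf{x}) = \widetilde{P}_\xi(\mathbf{x} + \xi \mathbf{1})$ for every $\mathbf{x} \in \mathcal{I}$. The map $\mathbf{x} \mapsto \mathbf{x} + \xi \mathbf{1}$ is a bijection $\mathcal{I} \to \widetilde{\mathcal{I}}$, so bounds for $\widetilde{P}_\xi$ on $\widetilde{\mathcal{I}}$ obtained via part (i) are automatically bounds for $P$ on $\mathcal{I}$.

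There is no substantial obstacle here: the lemma is the elementary statement that a monomial with positive arguments is monotone in each variable, combined with the triangle-inequality-style observation that termwise bounds sum to valid bounds for the whole polynomial. The only mild subtlety worth flagging in the write-up is that the bounds produced are not sharp in general (they add together the worst-case corners for every monomial separately), but this is harmless for the discard procedure described in Step 2, which only needs a strict sign, not optimal constants.
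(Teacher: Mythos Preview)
Your proof is correct. The paper does not actually supply a proof of this lemma: it calls the result ``straightforward'' and moves on, so your argument fills in precisely the routine monotonicity-plus-translation reasoning that the authors leave to the reader.
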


We try to apply the discard procedure until the number of remaining
boxes coincides with our hopes. These hopes usually came from a
previous numerical study of the problem. We start trying to prove
that  the first function $f_1$ does not vanish in the given box. It
may happen that it is easier to try to prove the same with another
$f_j.$ Notice also that sometimes to discard a box we must go to the
Step 1 and start with smaller boxes.

\smallskip

\noindent \textbf{Step 3:} Once it is achieved an optimized list of
        non-discarded boxes, we  identify those boxes that correspond to either fixed points or periodic points
        with a period being a divisor of $p$, which we assume that we already know, and we also   discard them.

\smallskip

\noindent \textbf{Step  4:} From the non-discarded boxes list obtained in the previous step,
 we  try to show that  each box actually contains a solution by applying the  Poincar\'e-Miranda
theorem. For completeness, we recall it. As usual, $\overline B$ and
$\partial B$ denote, respectively, the closure and the boundary of a
set $B\subset\R^n.$

\begin{teo}[Poincar\'e-Miranda]\label{T-PM-n}
       Set $\mathcal{I}=\{\mathbf{x}=(x_1,\ldots,x_n)\in\R^n\,:\,L_i<x_i<U_i, 1\leq i\leq n\}$. Suppose that
         $f=(f_1,f_2,\ldots,f_n):\overline{\mathcal{I}}\rightarrow R^n$ is
         continuous,  $f(\mathbf{x})\neq\mathbf{0}$
           for all $\mathbf{x}\in\partial \mathcal{I}$, and for  $1\leq i\leq
       n,$
       $$f_i(x_1,\ldots,x_{i-1},L_i,x_{i+1},\ldots,x_n)\cdot f_i(x_1,\ldots,x_{i-1},U_i,x_{i+1},\ldots,x_n)\leq 0,$$ 
       Then, there exists $\mathbf{s}\in\mathcal{I}$ such that  $f(\mathbf{s})= \mathbf{0}$.
       \end{teo}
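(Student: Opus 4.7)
The plan is to derive Poincar\'e--Miranda from the Brouwer fixed point theorem by constructing an auxiliary self-map of $\overline{\mathcal{I}}$ whose fixed points are forced to be zeros of $f$.

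First I would normalise the geometry: by an affine change of coordinates, reduce to $\mathcal{I}=(-1,1)^n$, and, after possibly replacing some components $f_i$ by $-f_i$, set up the sign convention $f_i\le 0$ on the face $\{x_i=-1\}$ and $f_i\ge 0$ on the face $\{x_i=+1\}$ for each $i$. This reduction uses both the product condition and the hypothesis $f\ne\mathbf{0}$ on $\partial\mathcal{I}$, which together rule out the configurations in which $f_i$ would be obliged to change sign within a single face.

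Next I would pick $\alpha>0$ small and define $g\colon\overline{\mathcal{I}}\to\overline{\mathcal{I}}$ by $g_i(\mathbf{x})=\max(-1,\min(1,x_i-\alpha f_i(\mathbf{x})))$. The map $g$ is continuous, and the truncations guarantee $g(\overline{\mathcal{I}})\subseteq\overline{\mathcal{I}}$, so Brouwer produces $\mathbf{x}^\star\in\overline{\mathcal{I}}$ with $g(\mathbf{x}^\star)=\mathbf{x}^\star$. For each coordinate $i$ there are three possibilities: either the inner truncation is inactive, in which case $g_i(\mathbf{x}^\star)=x_i^\star-\alpha f_i(\mathbf{x}^\star)=x_i^\star$ forces $f_i(\mathbf{x}^\star)=0$; or $g_i(\mathbf{x}^\star)=-1$, which gives $x_i^\star=-1$ together with $f_i(\mathbf{x}^\star)\ge 0$, and the sign convention on that face then forces $f_i(\mathbf{x}^\star)=0$; or $g_i(\mathbf{x}^\star)=+1$, which is symmetric. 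In every case $f_i(\mathbf{x}^\star)=0$, so $\mathbf{s}:=\mathbf{x}^\star$ is the desired zero.

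The main obstacle is really the first step: converting the paper's bilateral product hypothesis into the one-sided sign convention required by the Brouwer construction. If that reduction proves awkward, a robust alternative is a degree-theoretic route: use the linear homotopy $H_t(\mathbf{x})=tf(\mathbf{x})+(1-t)(\mathbf{x}-\mathbf{c})$ from $f$ to the translated identity based at the centre $\mathbf{c}$ of $\mathcal{I}$, verify via the boundary sign conditions that $H_t(\mathbf{x})\ne\mathbf{0}$ on $\partial\mathcal{I}$ for every $t\in[0,1]$, and conclude that $\deg(f,\mathcal{I},\mathbf{0})=\deg(\mathrm{id}-\mathbf{c},\mathcal{I},\mathbf{0})=1\ne 0$, which produces a zero of $f$ in $\mathcal{I}$.
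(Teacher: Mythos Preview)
The paper does not actually prove Theorem~\ref{T-PM-n}; it merely states it and cites \cite{Miranda,K,V} for proofs. So there is no ``paper's own proof'' to compare against. That said, your sketch has a real gap that is worth spelling out, because it stems from the precise form of the hypothesis used here.

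The classical Miranda hypothesis is the \emph{uniform} sign condition: each $f_i$ has one sign on the whole face $\{x_i=L_i\}$ and the opposite sign on $\{x_i=U_i\}$. For that version your Brouwer construction with the clipped map $g_i(\mathbf{x})=\max(-1,\min(1,x_i-\alpha f_i(\mathbf{x})))$ is exactly right. The statement in the paper, however, only assumes the \emph{pointwise} product condition $f_i|_{x_i=L_i}\cdot f_i|_{x_i=U_i}\le 0$, which is strictly weaker: it allows $f_i$ to change sign along a face, provided the value on the opposite face tracks it. Your reduction step claims that, together with $f\ne\mathbf{0}$ on $\partial\mathcal{I}$, this forces a uniform sign on each face up to flipping $f_i\mapsto -f_i$. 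That is false. On $\mathcal{I}=(-1,1)^2$ take
\[
f_1(x_1,x_2)=x_1,\qquad f_2(x_1,x_2)=x_2\,(x_1-\tfrac12).
\]
Both product conditions hold ($f_1(-1,\cdot)f_1(1,\cdot)=-1$ and $f_2(\cdot,-1)f_2(\cdot,1)=-(x_1-\tfrac12)^2$), and $f\ne\mathbf{0}$ on $\partial\mathcal{I}$ (check the four edges). Yet $f_2(\cdot,-1)=\tfrac12-x_1$ changes sign on the bottom face, and no global sign flip of $f_2$ can repair this. So your first step does not go through.

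Your fallback homotopy $H_t(\mathbf{x})=t\,f(\mathbf{x})+(1-t)(\mathbf{x}-\mathbf{c})$ also fails on the same example: with $\mathbf{c}=0$ one computes $H_t(\mathbf{x})=\bigl(x_1,\;x_2(t x_1+1-\tfrac{3t}{2})\bigr)$, and at $t=2/3$ this vanishes at the boundary points $(0,\pm1)$. Hence $H_t$ is not admissible for the degree argument, and the verification you propose cannot succeed in general under the product hypothesis alone.

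What does work is to note that if the product is \emph{strictly} negative everywhere, then each $f_i|_{x_i=L_i}$ is continuous and nowhere zero, hence of constant sign, and the uniform convention is recovered; your Brouwer argument then applies verbatim. The non-strict case can then be handled by a careful perturbation-and-limit argument (this is essentially the content of the references \cite{Miranda,K,V}), but that step is not as immediate as your sketch suggests and needs to be carried out explicitly.
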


It is clear that when we define $f$ to try to apply Poincar\'{e}-Miranda
theorem, the order of the components matters. So, sometimes to be
under the hypotheses of the theorem it is better to consider
$f=(f_{\sigma_1},f_{\sigma_2},\ldots,f_{\sigma_n})$ for some
permutation $\sigma.$ In fact, more in general, it is convenient to
apply the theorem to $A (f(\mathbf{x}))^t,$ where $A$ is a suitable
$n\times n$ invertible matrix. When $f$ is differentiable, as we
will see it is useful to chose $A=(\mathrm{D}f(\widehat
{\mathbf{s}}))^{-1},$ where $\widehat{\mathbf{s}}\in\Q^n$ is a
numerical approximation of a zero of $f$ in $\mathcal{I}.$

 If we succeed in proving that there is at least a solution in each box,
  its uniqueness is given by the fact that each of the intervals $ I_{j,m} $ contains
  only a single solution of each polynomial $q_j$. Otherwise we can refine
   boxes, taking them with smaller size,  and then repeating the computations in Step 1.

\section{An overview of the dynamics of $G$}\label{S-Landen}

In this section we briefly summarize the known results on the
dynamics of the map $G$ and we characterize their invariant sets. We
mainly follow the steps in \cite{chamb}. The rational integral
\eqref{eq6} is well-defined and convergent if $P(x)=x^3+ax^2+bx+1$
has not real positive roots. To study the number of real roots of
$P$ when $a$ and $b$ vary,  we consider
$$
  R(a,b):=\mathrm{Res}(P,P';x)=-\Delta_x(P)=-{a}^{2}{b}^{2}+4\,{a}^{3}+4\,
  {b}^{3}-18\,ab+27,
$$
\noindent where $\Delta_x$ is the discriminant. The curve $R(a,b)=0$
is known as the \emph{resolvent} one, and after removing the point
$(-1,-1)$ it is invariant by $G$ because
 \begin{equation}\label{E-Rinv}
  R(G(a,b))=\frac{(a-b)^2}{(a+b+2)^4}R(a,b).
 \end{equation}
 The curve has two connected components
 $L_1$ i $L_2$ (see Figure 1 (a)). Note that
 the fixed point $(3,3)$ is the cusp  of $L_1$.
 \begin{center}
  \includegraphics[scale=0.3]{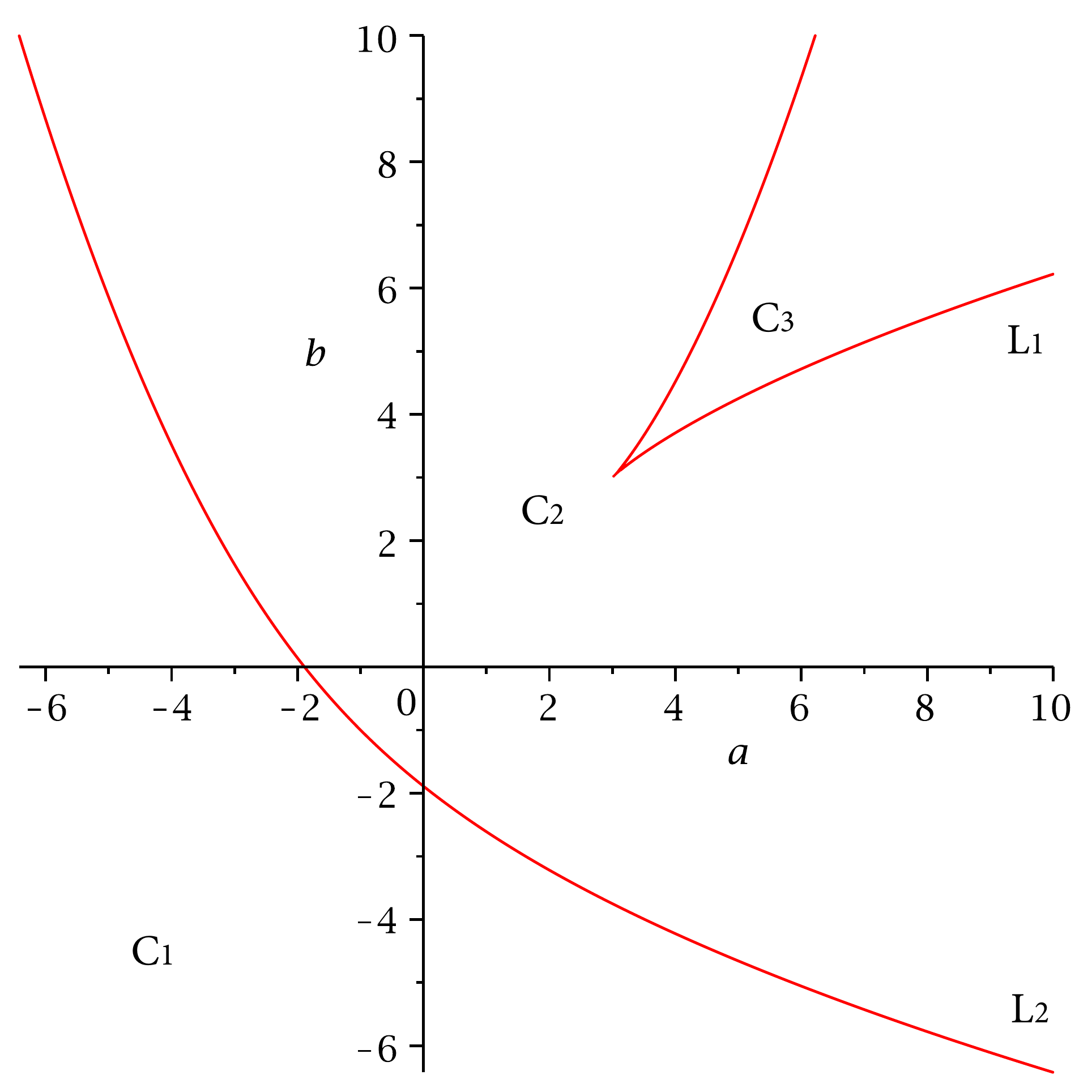} \hspace{1cm} \includegraphics[scale=0.55]{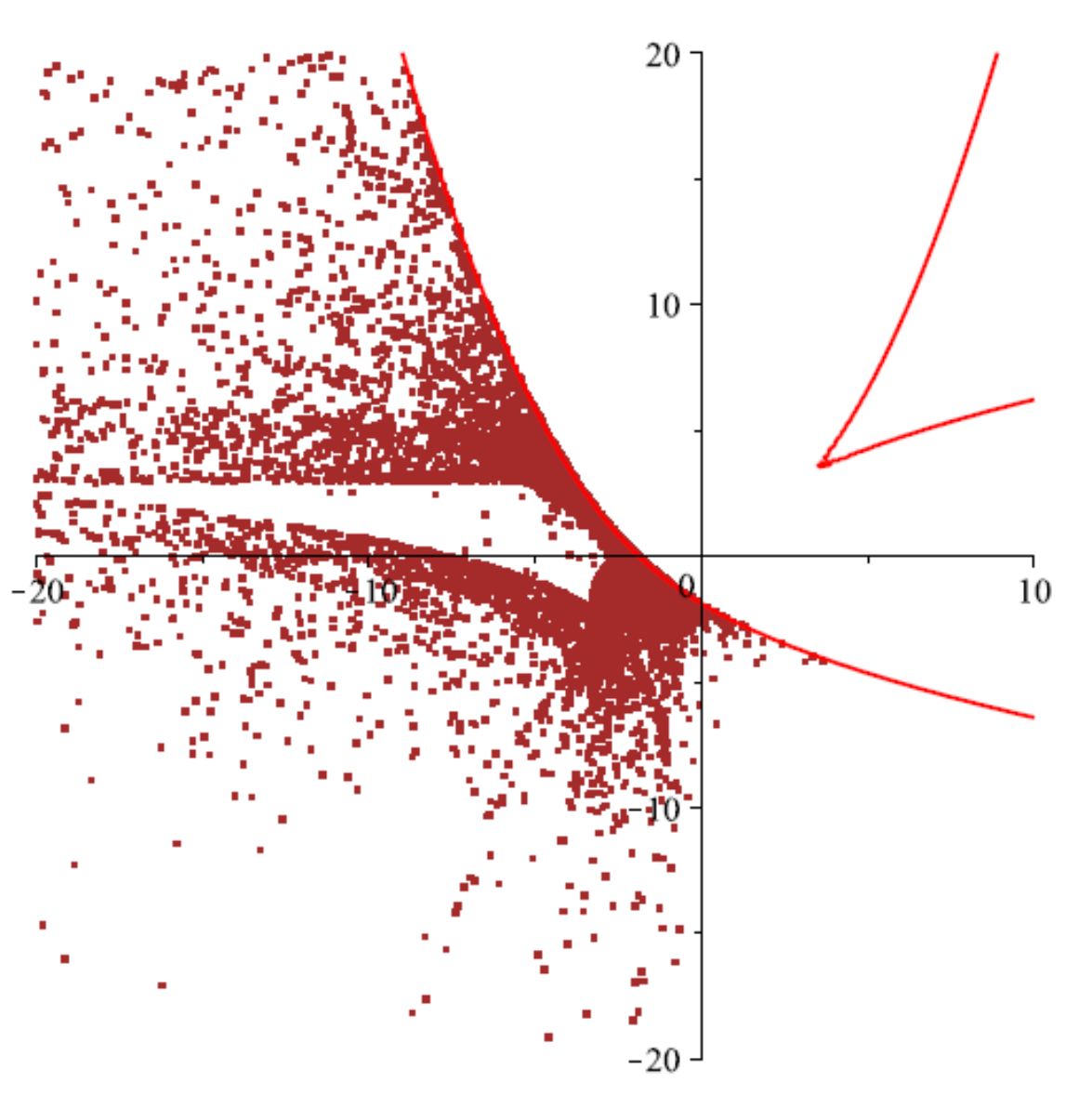}

(a) $\phantom{xxxxxxxxxxxxxxxxxxxxxxxxxxxxxxxxxxxxxxx} $ (b)

\smallskip

 Figure 1. (a) Connected components  $L_1$ and $L_2$ of the curve $R(a,b)=0$,
  and regions $C_1$, $C_2$ and $C_3$ of the plane. (b)  10000 iterates of an orbit with initial condition in $C_3$.
 \end{center}

The resolvent curve defines three open unbounded sets  $C_1$, $C_2$
and $C_3$ depicted in  Figure  1~(a). By studying the sign of the
discriminant of $P$, and by using the Descartes rule of signs, it is
straightforward to obtain that on $L_1\cup C_2\cup C_3$, all the
real roots of $P(x)$ are negative, so the integral  \eqref{eq6} is
convergent; and on $C_1\cup L_2$ there exists at least one positive
real root, so the integral diverges.

In \cite{chamb}, the authors proved that $G$ has only three fixed
points, namely $P_i$ for $i=1,2,3,$ which are described in Theorem
\ref{T-main} and given in Equation \eqref{E-PF}. One of them, the
point  $P_1=(3,3)$, is a super-attracting one, i.e. both eigenvalues
of the jacobian matrix are $0$. Their main result states that
\emph{the basin of attraction of the fixed point $P_1$ for the map
$G$, is the region of the $(a,b)$-plane where the
  integral \eqref{eq6} converge}. As a consequence, the basin of attraction  of $P_1$ is $L_1\cup C_2\cup C_3$.

On the other hand, the connected component
$L_2':=L_2\setminus\{(-1,-1)\}$ is \emph{positively} invariant (as
we will see, there are points on $C_1$ which are mapped into $L_2$).
On $L_2$ there is one fixed point, $P_2$, which is a saddle. In
Proposition \ref{P-atraccio-g} we prove that any orbit with initial
condition on $L_2'$ converges to $P_2$.

In summary, the dynamics of $G$ on the invariant sets
 $\mathcal{A}:=L_1\cup C_2\cup C_3$   and  $L_2$ is known and simple. However, there is a poor knowledge of
 the dynamics of $G$ in the set
$\mathcal{B}:=C_1\setminus \mathcal{F},$ where $
\mathcal{F}=\{(a,b)\in\R^2:\,\exists\, n\geq 0\,:\, G^n(a,b)\in
\{a+b+2=0\}\}$, is the forbidden set of $G$.

In \cite[Conj. 15.6.3]{llibre-Moll}, V.~Moll established the
following conjecture about the dynamics of~$G$ in~$\mathcal{B}$:
\emph{``The orbit of any point below the resolvent curve is dense in
the open region below this curve.''}

 Of course one has to exclude from this conjecture
the third fixed point $P_3$ which is in the set $C_1$, and the
points  in the forbidden set $\mathcal{F}$.  In Theorem
\ref{T-main}, we prove the existence of $3$-periodic points in
$C_1,$ result that disproves the conjecture. In fact it is not
difficult to find \emph{numerically} these orbits as well as other
periodic points, however to prove the existence of $3$-periodic
points is far from being trivial, and it is the main objective of
this paper.

 In Section \ref{S-Num-An} we present an analytic-numeric study that evidences the existence of points in the
 unstable manifold of $P_2$ which belongs to its stable
 set, i.e.  \emph{homoclinic points}. In the case of
  diffeomorphisms, by the Smale-Birkhoff homoclinic theorem, the
   existence of such points  implies the existence of a hyperbolic
   invariant set on which the dynamics is equivalent to a subshift of
    finite type, see \cite{GuH}. Similar results are developed in \cite{Gard,GarSus11} in the non-invertible setting.
  We also give evidences of the existence of points in the
  unstable manifold that also belong to the forbidden set~$\mathcal{F}$.

\section{Proof of Theorem \ref{T-main}}\label{S-fixos-i-periodics}

In this section we prove Theorem \ref{T-main}. We split the proof in
two subsections. The first one dedicated to the fixed and 2-periodic
points, and the second one to study the 3-periodic points.

\subsection{Fixed and $2$-periodic points}

\begin{proof}[Proof of statements (a) and (b)] (a) Following \cite{chamb}
 we consider the equations given by  $G(a,b)=(a,b)$, and we introduce the auxiliary variable $m^3=a+b+2$, obtaining
\begin{equation}\label{eq:pfixos}
\begin{cases}
  \begin{array}{ll}
  d_1(a,b,m)&:=m^3-a-b-2=0,\\
  d_2(a,b,m)&:=-am^4+ab+5a+5b+9=0,\\
  d_3(a,b,m)&:=-bm^2+a+b+6=0.
 \end{array}
 \end{cases}
 \end{equation}
Isolating $a$ and $b$ from the first and third equations, and
substituting the obtained expressions in the second one we get:
 \begin{align}\label{eq:fixos}
  d_4(m):=- ( m-2  )   ( {m}^{2}-m+1  )   ( {m}^{2}+m+2
          )   ( {m}^{3}+{m}^{2}-m-2  ) ( {m}^{3}+{m}^{2}+m +2  )=0.
 \end{align}
 The only real roots of the above equation
 are
{\scriptsize
\begin{align}\label{eq:fixos2}
m_1=2,\quad m_2=\frac{1}{6}\,\sqrt [3]{A}+\frac{8}{3}\,{\dfrac
{1}{\sqrt [3]
      {A}}}-\frac{1}{3}\simeq 1.20557,\quad
  m_3=-\frac{1}{6}\,\sqrt [3]{B}+\frac{4}{3}\,{\dfrac {1}
      {\sqrt [3]{B}}}-\frac{1}{3}\simeq -1.35321,
 \end{align}}
where $A=172+12\,\sqrt {177}$ and $B=188+12\,\sqrt {249}$. From
these values and \eqref{eq:pfixos} we obtain
 {\scriptsize \begin{align}\label{E-PF}
    P_1&=(3,3), \nonumber\\
    P_2&=\Bigg( {\frac
       {-43+3\sqrt {177}}{384}}  A^{2/3}-\frac{1}{6} A^{1/3}-\frac{8}{3},
       {\frac{13-\sqrt {177}}{48}}  A^{2/3}+\frac{7+\sqrt {177}}{48}  A^{1/3}
       +\frac{4}{3} \Bigg)\simeq(-4.20557,  3.95774), \\
    P_3&=\Bigg( {\frac{-21+\sqrt {249}}{96}}  B^{2/3}+\frac{15-\sqrt {249}}{12} B^{1/3} -2,{\frac{17-\sqrt {249}}{48}}  B ^{2/3}
       +{\frac{-13+\sqrt {249}}{24}} B^{1/3}-\frac{4}{3} \Bigg)\simeq(-5.30914,
       0.83118)\nonumber.
  \end{align}}
A  straightforward computation of the differential matrix at these
points  give that the points are, respectively, a super-attractor
(null eigenvalues), an oscillatory saddle, and an unstable focus.
Moreover $P_2$ is in $L_2$ and $P_3$ is in $C_1.$

\medskip

(b) Again, following \cite{chamb}, we consider $c$ and $d$ such that
 $G(a,b)=(c,d)$ and
 $G(c,d)=(a,b)$. By introducing the two auxiliary variables
 $m$ and  $n$ such that
 $m^3=a+b+2$ and $n^3=c+d+2$, we get:
$$\begin{cases}
\begin{array}{ll}
  d_1&:=m^3-a-b-2=0,\quad
  d_2:=n^3-c-d-2=0,\\
  d_3&:=-cm^4+ab+5a+5b+9=0,\quad
  d_4:=-dm^2+a+b+6=0,\\
  d_5&:=-an^4+cd+5c+5d+9=0,\quad
  d_6:=-bn^2+c+d+6=0.
 \end{array}
  \end{cases}
 $$
Solving
 $\{d_1=0,d_2=0, d_4=0, d_6=0\}$ we obtain
$$
  a={\dfrac {{m}^{3}{n}^{2}-{n}^{3}-2\,{n}^{2}-4}{{n}^{2}}},\quad
  b=\dfrac {{n}^{3}+4}{{n}^{2}},\quad
  c=\dfrac {{m}^{2}{n}^{3}-{m}^{3}-2\,{m}^{2}-4}{{m}^{2}},\quad
  d=\dfrac {{m}^{3}+4}{{m}^{2}}.
$$
By substituting the above result in  $d_3$ and
 $d_5$ we reach the following system, which plays the role of system \eqref{desc1} in our methodology:
\begin{equation}\label{E-2pd7d8}
\begin{cases}\begin{array}{rl}
 d_7(m,n):=&-{m}^{4}{n}^{7}+{m}^{5}{n}^{4}+2 {m}^{4}{n}^{4}
          +{m}^{3}{n}^{5}+5 {m}^{3}{n}^{4}+4 {m}^{2}{n}^{4}-{n}^{6}\\
          &+4 {m}^{3}{n}^{2}-2 {n}^{5}-{n}^{4}-8 {n}^{3}-8 {n}^{2}
          -16=0,\\
  d_8(m,n):=& d_7(n,m)=0.
\end{array}\end{cases}
\end{equation}

Now we consider the polynomial
\begin{align*}
  d_9(m):=&\mathrm{Res}(d_{7}(m,n),d_{8}(m,n);n)\\
  =&{m}^{4} \left( m-2 \right) \left( m+1 \right) ^{2}\left( {m}^{3}+{m}^{2}+m+2 \right)
\left( {m}^{3}+{m}^ {2}-m-2 \right)P(m),
\end{align*}
where $P$ is a polynomial of degree 56, without real roots. This is
proved by using the Sturm's method and can also be done, for
instance, by using the command \texttt{realroot} of the computer
algebra system Maple. Similarly,
$d_{10}(n):=\mathrm{Res}(d_{7}(m,n),d_{8}(m,n);m)$. As a consequence
of the symmetry we get
 that $d_{10}(n)=-d_{9}(n)$ and
  system $\{d_9(m)=0,\, d_9(n)=0\}$ plays  the role of system~\eqref{desc2}
 in our methodology. Hence, the only
 non-zero
 reals roots of $d_9$ are $-1,2, m_1$ and $m_2,$ where these values
 correspond to the ones associated with the fixed points, because
  the two degree~3 factors coincide with the ones given in
 \eqref{eq:fixos}.

Hence the 2-periodic points are included in the set with 16 elements
$\{-1,2,m_1,m_2\}^2.$ In this particular case, because the real
solutions of the uncoupled system are explicit, in Steps 3 and 4 of
our approach we have not boxes but points, and the problem is much
easier.  It is not difficult to check that in this set of points the
only solutions of \eqref{E-2pd7d8} are $(1,1), (m_1,m_1)$ and
$(m_2,m_2)$ which correspond to the fixed points of $G.$ In
consequence there are not points of minimal period $2$
for~$G$~\end{proof}

\subsection{Proof of Theorem \ref{T-main} (c): $3$-periodic points}

We
 need some preliminary results. Proceeding as in the previous
  cases we look for $a$, $b$, $c$, $d$, $e$, and $f\in\R$, such that $G(a,b)=(c,d)$, $G(c,d)=(e,f)$ and $G(e,f)=(a,b)$,
that is
\begin{align*}
  &\dfrac{5a+5b+ab+9}{(a+b+2)^{4/3}}=c,
  &&\dfrac{a+b+6}{(a+b+2)^{2/3}}=d,
  &&\dfrac{5c+5d+cd+9}{(c+d+2)^{4/3}}=e,\\[0.1cm]
  &\dfrac{c+d+6}{(c+d+2)^{2/3}}=f,
&&\dfrac{5e+5f+ef+9}{(e+f+2)^{4/3}}=a,
  &&\dfrac{e+f+6}{(e+f+2)^{2/3}}=b.
\end{align*}
We introduce the auxiliary variables
 $m$, $n$ and $r$, such that
 $m^3=a+b+2$, $n^3=c+d+2$  and $r^3=e+f+2$.
Using this notation we get
$$\begin{cases}
\begin{array}{lll}
  d_1:=m^3-a-b-2=0,& d_4:=-cm^4+ab+5a+5b+9=0,& d_5:=-dm^2+a+b+6=0,\\
  d_2:=n^3-c-d-2=0,& d_6:=-en^4+cd+5c+5d+9=0,& d_7:=-fn^2+c+d+6=0,\\
  d_3:=r^3-e-f-2=0,& d_8:=-ar^4+ef+5e+5f+9=0,&d_9:=-br^2+e+f+6=0.
\end{array}
\end{cases}
$$

First we solve the system
 $\{d_1=0,d_2=0,d_3=0,d_5=0,d_7=0,d_9=0\}$ obtaining:
\begin{align}\label{E-abcdef}
  a&=\dfrac{{m}^{3}{r}^{2}-{r}^{3}-2\,{r}^{2}-4}{{r}^{2}},\quad
  b=\dfrac{{r}^{3}+4}{{r}^{2}},\quad
  c=\dfrac{{m}^{2}{n}^{3}-{m}^{3}-2\,{m}^{2}-4}{{m}^{2}},\nonumber \\
  d&=\dfrac{{m}^{3}+4}{{m}^{2}}, \quad
  e=\dfrac{{n}^{2}{r}^{3}-{n}^{3}-2\,{n}^{2}-4}{{n}^{2}}, \quad
  f=\dfrac{{n}^{3}+4}{{n}^{2}}.
\end{align}
Substituting this result in the expressions $d_4,$ $d_6,$ and $d_8$,
we obtain the equations
 \begin{equation}\label{E-d10d11d12}
\begin{cases}
 \begin{array}{ll}
  d_{10}(m,n,r):=&-{m}^{4}{n}^{3}{r}^{4}+{m}^{5}{r}^{4}
    +2\,{m}^{4}{r}^{4}+{m}^{3}{r}^{5}+5\,{m}^{3}{r}^{4}
    +4\,{m}^{2}{r}^{4}-{r}^{6}+4\,{m}^{3}{r}^{2}\\
    &-2\,{r}^{5}-{r}^{4}-8\,{r}^{3}-8\,{r}^{2}-16=0,\\
  d_{11}(m,n,r):=&d_{10}(n,r,m)=0,\\
  d_{12}(m,n,r):=&d_{10}(r,m,n)=0.
\end{array} \end{cases}
 \end{equation}

From the equations \eqref{E-abcdef}, if $(m,n,r)$ is a real solution
of \eqref{E-d10d11d12} such that $m \cdot n\cdot  r\neq 0$,
 there exists either an orbit with minimal period
 $3$ given by  \eqref{E-abcdef} or a fixed point or $G$. Moreover,
 if $(m_0,n_0,r_0)$ is a solution of system  \eqref{E-d10d11d12}, then so are
 $(n_0,r_0,m_0)$ and $(r_0,m_0,n_0)$. As a consequence, we obtain

\begin{lem}\label{O-1}
Any $3$-periodic orbit of  $G$,
  $\{(a,b); (c,d); (e,f)\}$ with associated parameters $m,n$ and $r$,
   is in correspondence, via \eqref{E-abcdef}, with the solutions $(m,n,r)$, $(n,r,m)$ and $(r,m,n)$ of  the system~\eqref{E-d10d11d12}.
  \end{lem}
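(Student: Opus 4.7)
The plan is to perform the algebraic translation of the system $G^3=\mathrm{Id}$ into the polynomial system \eqref{E-d10d11d12}, and then to exploit the cyclic symmetry of the problem. Since the three orbit points lie outside the forbidden set, the real numbers $a+b+2$, $c+d+2$, $e+f+2$ are all nonzero, so their (unique) real cube roots $m,n,r$ are well-defined and nonzero. By construction $d_1=d_2=d_3=0$. Multiplying each of the six scalar equalities coming from $G(a,b)=(c,d)$, $G(c,d)=(e,f)$, $G(e,f)=(a,b)$ by a suitable integer power of $m,n,r$ clears the fractional exponents $(a+b+2)^{2/3}$ and $(a+b+2)^{4/3}$, and yields exactly the polynomial relations $d_4=d_5=d_6=d_7=d_8=d_9=0$.

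Next I would eliminate the six coordinate variables from the subsystem $\{d_1,d_2,d_3,d_5,d_7,d_9\}=0$. Each of these six equations is linear in $a,b,c,d,e,f$, and the linear system decouples into three pairs (one per consecutive iterate of $G$), each solvable explicitly. This produces the rational formulas for $(a,b,c,d,e,f)$ in terms of $(m,n,r)$ displayed in \eqref{E-abcdef}. Substituting these formulas into the three remaining equations $d_4=d_6=d_8=0$ and clearing the $m^2$, $n^2$, $r^2$ denominators (legitimate since $mnr\neq 0$) gives exactly the three polynomial equations $d_{10}=d_{11}=d_{12}=0$ of \eqref{E-d10d11d12}. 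Conversely, formula \eqref{E-abcdef} recovers the original orbit from $(m,n,r)$, so the triple is uniquely determined once a starting vertex of the orbit has been chosen.

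The cyclic part of the statement is built into the derivation. The same $3$-periodic set $\{(a,b),(c,d),(e,f)\}$ may be listed starting from any of its three points; re-labelling the starting vertex as $(c,d)$ turns the associated parameters into $(n,r,m)$, and re-labelling it as $(e,f)$ turns them into $(r,m,n)$. Because the system \eqref{E-d10d11d12} is defined precisely by $d_{11}(m,n,r)=d_{10}(n,r,m)$ and $d_{12}(m,n,r)=d_{10}(r,m,n)$, it is invariant under the cyclic shift $(m,n,r)\mapsto(n,r,m)$; hence the two re-labelled triples $(n,r,m)$ and $(r,m,n)$ are also solutions of \eqref{E-d10d11d12}, and they are the only ones attached to the given orbit.

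The main obstacle here is bookkeeping rather than conceptual difficulty: one has to verify carefully that the explicit linear elimination of $a,b,c,d,e,f$ yields precisely \eqref{E-abcdef}, and that the subsequent rational substitution, after clearing denominators, matches the polynomial $d_{10}(m,n,r)$ written down in the statement. The only analytic subtlety is the non-vanishing of $mnr$, which is automatic for a true $3$-periodic orbit (any vanishing factor would place a point of the orbit in the forbidden set where $G$ is undefined) but must be recorded since $m$, $n$, $r$ appear in the denominators of \eqref{E-abcdef}.
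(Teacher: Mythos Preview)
Your argument is correct and follows essentially the same route as the paper: the derivation of \eqref{E-abcdef} by linear elimination from $\{d_1,d_2,d_3,d_5,d_7,d_9\}=0$, the substitution into $d_4,d_6,d_8$ to obtain \eqref{E-d10d11d12}, and the cyclic invariance $(m,n,r)\mapsto(n,r,m)$ coming from the identities $d_{11}(m,n,r)=d_{10}(n,r,m)$ and $d_{12}(m,n,r)=d_{10}(r,m,n)$ are exactly what the paper records in the paragraph preceding the lemma. Your proposal is in fact more explicit than the paper, which simply states the conclusion after presenting the reduction.
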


The forthcoming  Lemma \ref{L-lem2} gives a first characterization
of the locus where the solutions of system \eqref{E-d10d11d12} are
located. Prior to state this result we introduce the following
auxiliary polynomials
 \begin{align*}
   d_{13}(n,r)&:=\mathrm{Res}(d_{10}(m,n,r),d_{12}(m,n,r);m),
        \, \mbox{with degree $37$ in $n$ and degree $37$ in $r$,}\\
   d_{14}(n,r)&:=\mathrm{Res}(d_{11}(m,n,r),d_{12}(m,n,r);m),
        \, \mbox{with degree $47$ in $n$ and degree $37$ in $r$.}\
 \end{align*}
We apply the resultant once again to obtain the polynomials $
   d_{15}(n):=\mathrm{Res}(d_{13}(n,r),d_{14}(n,r);r),$ and
$   d_{16}(r):=\mathrm{Res}(d_{13}(n,r),d_{14}(n,r);n),
$
where $\mathrm{deg}_n(d_{15}(n))=2521$ and $\mathrm{deg}_r(d_{16}(r))=1985$.
Finally, we introduce the polynomial
\begin{equation}\label{eq:d17}
   d_{17}(n):=\gcd\left(d_{15}(n),d_{16}(n)\right)/n^{716}.
\end{equation}
 This polynomial has degree $371$, and using once more Sturm's method we get that
 it has exactly $16$ different
real  non-zero roots.

\begin{lem}\label{L-lem2}
Let  $I_i,$ with $i=1,\ldots, 16,$ be  disjoint intervals, each one
of them containing a unique real root of $d_{17}$. Then, any real
solution  $(m,n,r)$ of system  \eqref{E-d10d11d12} is contained in
one of the $16^3$ sets
\begin{equation}\label{E-caixesR3}
\mathcal{I}_{i,j,k}:=I_i\times I_j\times I_k,\quad i,j,k\in\{1,\ldots,16\}.
\end{equation}
 \end{lem}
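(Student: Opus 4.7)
The plan is to show that each coordinate of any real solution $(m_0,n_0,r_0)$ of \eqref{E-d10d11d12} must be a real root of $d_{17}$; the localization of the solution inside one of the $16^3$ boxes $\mathcal{I}_{i,j,k}$ then follows at once from the Sturm-based isolation already performed in Step 1 of the methodology of Section \ref{Ss-metode-general}.

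The only tool I need is the standard specialization property of the resultant: if two polynomials $P(x,y),Q(x,y)\in\R[x,y]$ admit a common root $x=x_0$ at $y=y_0$, then $\mathrm{Res}(P,Q;x)(y_0)=0$. Fix any real solution $(m_0,n_0,r_0)$ of \eqref{E-d10d11d12}. Applying this principle to $(d_{10},d_{12})$ and $(d_{11},d_{12})$ with respect to the variable $m$ yields $d_{13}(n_0,r_0)=d_{14}(n_0,r_0)=0$, and a second application of the same principle to $(d_{13},d_{14})$ (first eliminating $r$, then $n$) gives $d_{15}(n_0)=0$ and $d_{16}(r_0)=0$.

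Next I exploit the cyclic symmetry of \eqref{E-d10d11d12}: the substitution $(m,n,r)\mapsto(n,r,m)$ merely permutes the three defining equations, so that both $(n_0,r_0,m_0)$ and $(r_0,m_0,n_0)$ are again real solutions. Replaying the previous step on these two shifted triples provides $d_{15}(r_0)=d_{15}(m_0)=0$ and $d_{16}(n_0)=d_{16}(m_0)=0$. Hence each of the three numbers $m_0,n_0,r_0$ is simultaneously a real root of $d_{15}$ and of $d_{16}$, and so a real root of $\gcd(d_{15},d_{16})$.

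It remains to verify that none of these coordinates is zero, so that they survive the division by $x^{716}$ in the definition \eqref{eq:d17} of $d_{17}$. This is a short direct check: setting $m=0$ in $d_{10}$ gives $-(r^3+r^2+4)^2$, whose only real zero is $r=-2$; but $d_{11}(0,n,-2)=d_{10}(n,-2,0)=-16\neq 0$, so $m_0=0$ is incompatible with the full system. The cases $n_0=0$ and $r_0=0$ follow by cyclic symmetry. Therefore each coordinate is a nonzero common root of $d_{15}$ and $d_{16}$, hence a real root of $d_{17}$, and must lie in exactly one of the isolating intervals $I_1,\ldots,I_{16}$, proving the claim.

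The main obstacle is purely computational: the iterated resultants produce polynomials of very large degree (up to $2521$), so extracting the $\gcd$ and identifying the factor $x^{716}$ is what makes the symbolic step heavy, as is the Sturm-based verification that the remaining degree-$371$ polynomial has exactly $16$ real roots. A subtlety worth flagging is that a resultant can also vanish through the leading-coefficient mechanism without an actual common root; however since we use resultants only in the direction \emph{common root implies vanishing}, this causes no loss of solutions, only a possible growth in the extraneous factors that the explicit $\gcd$ and the power $x^{716}$ are designed to absorb.
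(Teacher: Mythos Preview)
Your proof is correct and follows essentially the same route as the paper's: use the resultant specialization property together with the cyclic symmetry of system \eqref{E-d10d11d12} (the content of Lemma~\ref{O-1}) to conclude that each coordinate lies in the zero set of $\gcd(d_{15},d_{16})$, hence of $d_{17}$. Your explicit check that no coordinate of a real solution can vanish is a welcome addition that the paper leaves implicit.
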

\begin{proof}
Let $(m_0,n_0,r_0)$ be a real solution of \eqref{E-d10d11d12}.  We
want to show that it is also a solution of $\{d_{17}(m)=0,
d_{17}(n)=0, d_{17}(r)=0\}$. Observe that by construction, $n_0$
must be a root of $d_{15}$. From  Lemma \ref{O-1},  we have that
$(m_0,n_0)$ must be also a zero of $d_{13}$ and $d_{14}$. Hence
$n_0$ must be also a zero of $d_{16}$, and therefore of
$\gcd\left(d_{15}(n),d_{16}(n)\right)$ which is a polynomial of
degree $1087$ with the factor $n^{716}$. Since we are interested in
its non-zero roots, we remove this factor, obtaining that $n_0$ must
be a root of $d_{17}$. By using an analogous argument and Lemma
\ref{O-1} again, we can see that $m_0$ and $r_0$ are also roots of
$d_{17}$.

Since $d_{17}$ has $16$ different  real roots, any solution
$(m,n,r)$ of system \eqref{E-d10d11d12} must be contained in a box
of the form \eqref{E-caixesR3}, and each box contains at most one
solution.
\end{proof}

Now we can prove statement (c) of Theorem \ref{T-main}. We follow
the steps explained in Section  \ref{Ss-metode-general}:

\smallskip

 \noindent \textbf{Step 1:} Recall that $d_{17}$ has  16 non-zero real roots. Two of them are $n=-1$, $n=2.$
 Although two more explicit roots are
  $n_i=m_i, i=1,2$ given in \eqref{eq:fixos2}, we prefer to take 14 intervals with rational ends and length smaller than $10^{-20},$
 $I_i, i=1,2,\ldots,14,$ each one of them containing a unique root.
 We consider:

{\tiny
\begin{align*}
&I_1= \left[
-{\frac{4308988841618670568853}{147573952589676412928}},-{\frac{
34471910732949364550823}{1180591620717411303424}} \right], && I_2=
\left[ -{\frac{
34411805733101949308435}{1180591620717411303424}},-{\frac{
17205902866550974654217}{590295810358705651712}}  \right],\\[0.1cm]
 &I_3= \left[ -{\frac{
9138398550509024508051}{1180591620717411303424}},-{\frac{
4569199275254512254025}{590295810358705651712}}  \right], &&
 I_4= \left[ -{\frac{
4416518740855918762195}{590295810358705651712}},-{\frac{
8833037481711837524389}{1180591620717411303424}}  \right],\\[0.1cm]
&I_5= \left[ -{\frac{
994661336537171251825}{295147905179352825856}},-{\frac{
3978645346148685007299}{1180591620717411303424}}  \right], && I_6=
\left[ -{\frac{
3977374161031280580629}{1180591620717411303424}},-{\frac{
994343540257820145157}{295147905179352825856}}  \right],\\[0.1cm]
 &I_7= \left[-{\frac{
197879469664271669175}{73786976294838206464}},-{\frac{
3166071514628346706799}{1180591620717411303424}}   \right], &&
 I_8= \left[ -{\frac{
3144313156826151948503}{1180591620717411303424}},-{\frac{
1572156578413075974251}{590295810358705651712}}  \right],\\[0.1cm]
 &I_9= \left[ -{\frac{
399397086201257638833}{295147905179352825856}},-{\frac{
1597588344805030555331}{1180591620717411303424}}  \right], &&
 I_{10}= \left[-{\frac{
1053526769518098399097}{4722366482869645213696}},-{\frac{
131690846189762299887}{590295810358705651712}}  \right],\\[0.1cm]
&I_{11}= \left[ -{\frac{
1064910654630154190265}{9444732965739290427392}},-{\frac{
133113831828769273783}{1180591620717411303424}}  \right],&&
 I_{12}= \left[ {\frac{
1065572958580542810237}{9444732965739290427392}},{\frac{
532786479290271405119}{4722366482869645213696}}  \right],\\[0.1cm]
&I_{13}= \left[ {\frac{
128535594827653577343}{590295810358705651712}},{\frac{
1028284758621228618745}{4722366482869645213696}}  \right],
 &&I_{14}= \left[ {\frac{
177910645965499912685}{147573952589676412928}},{\frac{
1423285167723999301481}{1180591620717411303424}}  \right].\\
\end{align*}
}
 We  also introduce the degenerate intervals $I_{15}=[-1,1]$ and
$I_{16}=[2,2]$ containing the exact roots $n=-1$ and $n=2$. By Lemma
\ref{L-lem2}, all the real solutions of system  \eqref{E-d10d11d12}
are contained in one of  the $16^3$  boxes \eqref{E-caixesR3}, where
we also call boxes the ones with some degenerate interval. Recall
that if a box $\mathcal{I}_{i,j,k}$ contains a solution of system
\eqref{E-d10d11d12}, then this solution is unique.

\smallskip

\noindent \textbf{Step 2:} We apply the discard procedure  to
$d_{10}$ and  the $4096$ boxes  of the form \eqref{E-caixesR3} given
by the intervals computed before. In  Lemma \ref{L-minmaj-nou-n} we
use the value $\xi=30$ and consider the polynomial
$P(m,n,r)=d_{10}(m-\xi,n-\xi,r-\xi),$ which has $224$ monomials. The
procedure implemented in Maple v.17 took 5.61s of real time in an
Intel i7-3770-3.4GHz CPU to discard $4080$ boxes. The code is given
in \cite[Chap. 5]{Llor}. In short, we obtain that each solution of
system \eqref{E-d10d11d12} must be contained in one of the following
$16$ non-discarded boxes
$$
       \begin{array}{| l | l | l | l |  l | l | l | l | }
     \hline
     \mathcal{I}_{1,5,11} & \mathcal{I}_{2,6,12} & \mathcal{I}_{3,7,13} & \mathcal{I}_{4,8,10}  &
     \mathcal{I}_{5,11,1} & \mathcal{I}_{6,12,2} & \mathcal{I}_{7,13,3} & \mathcal{I}_{8,10,4}  \\
     \hline
       \mathcal{I}_{9,9,9} & \mathcal{I}_{10,4,8} & \mathcal{I}_{11,1,5} & \mathcal{I}_{12,2,6} &
      \mathcal{I}_{13,3,7} & \mathcal{I}_{14,14,14} & \mathcal{I}_{16,16,15} & \mathcal{I}_{16,16,16}\\
     \hline
    \end{array}
   $$
Observe that the degenerated box $\mathcal{I}_{16,16,15}$, which
corresponds  with $(m,n,r)=(2,2,-1)$, must also be discarded because
 $d_{10}(2,2,-1)=0$, but $d_{11}(2,2,-1)=2304$.

\smallskip

\noindent \textbf{Step 3:} Following similar arguments that in the
proof of statement (b) we can discard boxes
$\mathcal{I}_{16,16,16},$ $\mathcal{I}_{14,14,14}$ and
$\mathcal{I}_{9,9,9}$ because they correspond to the fixed points
$P_1,P_2$ and   $P_3,$ respectively.

\smallskip

\noindent \textbf{Step 4:} We have obtained
 $12$ non-discarded boxes  that, from Lemma \ref{O-1}, if they correspond to periodic points of minimum period $3$,
 they would contain the parameters $(m,n,r)$ corresponding to the periodic points
according to the following groupings:
%
%
%

\begin{equation}\label{E-supercaixes}
 \begin{array}{cc}
\mathcal{O}_1\subset    
     \mathcal{I}_{1,5,11}\cup \mathcal{I}_{5,11,1}\cup \mathcal{I}_{11,1,5},&

\mathcal{O}_2\subset   
      \mathcal{I}_{2,6,12}\cup \mathcal{I}_{6,12,2}\cup  \mathcal{I}_{12,2,6},\\  

{}\\
\mathcal{O}_3\subset 
       \mathcal{I}_{3,7,13}\cup \mathcal{I}_{7,13,3}\cup \mathcal{I}_{13,3,7}, &

 \mathcal{O}_4\subset \mathcal{I}_{4,8,14}\cup \mathcal{I}_{8,14,4}\cup \mathcal{I}_{14,4,8}.
\end{array}
\end{equation}

We will prove that the above $12$ boxes indeed contain a solution
of system   \eqref{E-d10d11d12}, which will be unique as reasoned
above. To do this, we will apply the Poincar\'e-Miranda theorem
(Theorem \ref{T-PM-n}). Again by Lemma \ref{O-1} we only need to
prove that there is a solution of the system \eqref{E-d10d11d12} in
the boxes:
  $\mathcal{I}_{1,5,11}$,  $\mathcal{I}_{2,6,12}$,  $\mathcal{I}_{3,7,13}$, and  $\mathcal{I}_{4,8,14}$.
  For reasons of space we only give details for the first box.

We consider the polynomial map
$f(m,n,r):=\left(d_{10}(m,n,r),d_{11}(m,n,r),d_{12}(m,n,r)\right).$
We  denote the ends of the intervals $I_1$, $I_5$ and $I_{11}$
respectively: $ [\underline m,\overline m]:=I_1,\,[\underline n
,\overline n]:=I_5,\,[\underline r,\overline  r]:=I_{11}.$ Consider
also
 the middle point of    $\mathcal{I}_{1,5,11},$ $ \widehat
{p}=(\widehat {m},\widehat {n},\widehat {r})=\left(\big(\underline m
+\overline m\big)/2,\big(\underline n+\overline
n\big)/2,\big(\underline r+\overline r\big)/2\right). $

 \begin{center}
 \begin{minipage}{0.5\textwidth}
  \includegraphics[scale=0.50]{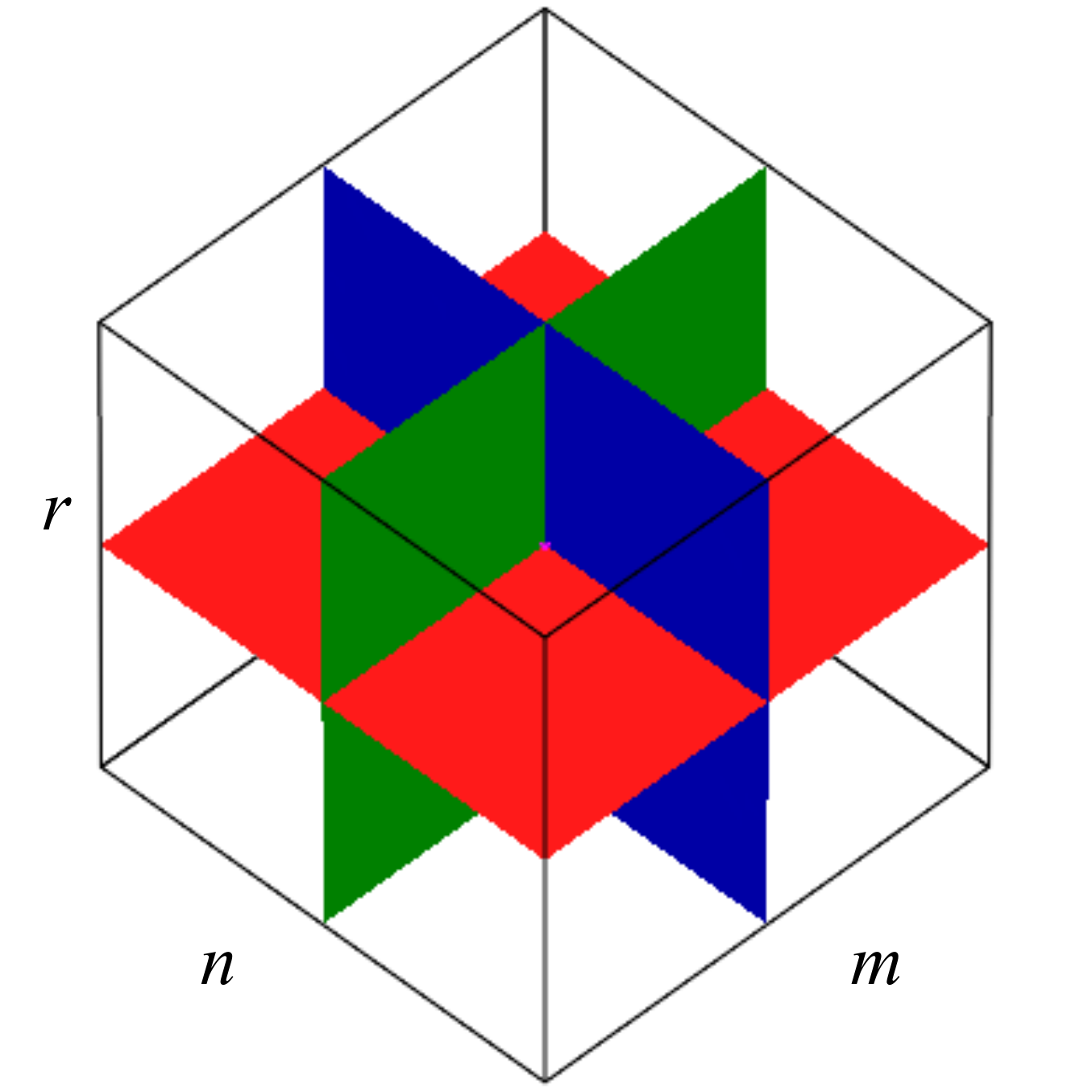}
  \end{minipage}
\begin{minipage}{0.49\textwidth}
 Figure 2. The surfaces $g_1(m,n,r)=0$, $g_2(m,n,r)=0$  and $g_3(m,n,r)=0$ in blue, green and red, respectively, in the box
 $[\widehat {m}-\varepsilon,\widehat {m}+\varepsilon]
 \times[\widehat {n}-\varepsilon,\widehat {n}+\varepsilon]\times[\widehat {r}-\varepsilon,\widehat {r}+\varepsilon],$  where $\varepsilon=10^{-10}$.
\end{minipage}
\end{center}

The hypothesis  of Poincar\'e-Miranda theorem for $f$ using the box
$\mathcal{I}_{1,5,11}$ are not satisfied: for instance, at the
points $(\underline m,\widehat {n},\widehat {r})$ and $(\overline
m,\widehat {n},\widehat {r})$ none of the functions $d_{10}, d_{11}$
and $d_{12}$ changes  sign. So in order to rectify the level 0
surfaces of the components of $f$, we consider the new function
$$
g(m,n,r)=\left(g_1(m,n,r),g_2(m,n,r),g_3(m,n,r)\right):=(\mathrm{D}f(\widehat
{p}))^{-1} (f(m,n,r))^t.
$$
 We omit here the expressions of
$(\mathrm{D}f(\widehat {p}))^{-1}$ and $g$ since they involve huge
rational numbers with numerators and denominators with hundreds of
digits. Notice that since
$\operatorname{det}\left(\mathrm{D}f(\widehat {p})\right)\ne0$
 the point $(m_0,n_0,r_0)$ is a zero
of  $g$ if and only if it is a zero of  $f$.

Observe that  $ g(m,n,r)=g(\widehat {p})+(m-\widehat {m},n-\widehat
{n},r-\widehat {r})+O(||(m-\widehat {m},n-\widehat {n},r-\widehat
{r})||^2)$. Since $g(\widehat {p})\simeq0,$ near $\widehat p$ it
holds that  $ g(m,n,r)\simeq (m-\widehat {m},n-\widehat
{n},r-\widehat {r})$ and so, a small enough box centered at
$\widehat p$ should be under the hypotheses of Poincar\'{e}-Miranda
theorem, see Figure 2. Now we will check that, indeed, this is  the
situation for the function $g$ in the box $\mathcal{I}_{1,5,11}$.

 In order to prove that the components of the function $g$ have
  no roots, and alternate signs at the faces of  $\mathcal{I}_{1,5,11}$
   we will apply repeatedly the following technical result, that is a simplified
   version adapted to our interests
   of a result given in
   \cite{JD2}:

  \begin{lem}\label{zeros2}
   Let $G_\alpha(x)=g_n(\alpha) x^n+g_{n-1}(\alpha) x^{n-1}+\cdots+g_1(\alpha)
   x+g_0(\alpha)$ be
   a family of real polynomials that depend continuously on a real parameter
 $\alpha\in\Lambda=[\alpha_1,\alpha_2]\subset\mathbb{R}$. Fix $J=[a,b]\subset\R$ and assume that:
   \begin{enumerate}[(i)]
    \item There exists $\alpha_0\in \Lambda$ such that  $G_{\alpha_0}(x)$ has no  real roots in  $J$.
    \item For all $\alpha\in \Lambda$,  $G_\alpha(a)\cdot G_\alpha(b)\cdot \Delta_x(G_\alpha)\neq 0,$ where
    $\Delta_x(G_\alpha)$
    is the discriminant of $G_\alpha$ with respect to $x.$
  \end{enumerate}
   Then for all $\alpha\in \Lambda$, $G_\alpha(x)$  has no  real roots in $ J$.
  \end{lem}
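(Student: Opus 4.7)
The strategy is the standard topological one: show that the set of ``bad'' parameters
$S := \{\alpha \in \Lambda : G_\alpha \text{ has a real root in } J\}$
is both open and closed in $\Lambda$. Since $\Lambda$ is connected and hypothesis $(i)$ gives $\alpha_0 \notin S$, the only clopen subset of $\Lambda$ missing $\alpha_0$ is the empty set, which is exactly the desired conclusion.

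Closedness of $S$ is immediate from continuity and compactness. Given $\alpha_k \to \alpha^*$ in $\Lambda$ with $\alpha_k \in S$, pick real roots $x_k \in J$ of $G_{\alpha_k}$. By compactness of $J$, after passing to a subsequence, $x_k \to x^*$ for some $x^* \in J$. Continuity of the coefficients gives continuity of $(\alpha,x) \mapsto G_\alpha(x)$, so $G_{\alpha^*}(x^*)=0$ and $\alpha^* \in S$.

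Openness of $S$ is where hypothesis $(ii)$ does all the work. Fix $\alpha^* \in S$ with a real root $x^* \in J$ of $G_{\alpha^*}$. The first two factors of $(ii)$ force $G_{\alpha^*}(a)\neq 0$ and $G_{\alpha^*}(b)\neq 0$, hence $x^* \in (a,b)$. The non-vanishing of the discriminant forces $G_{\alpha^*}$ to have only simple roots in $\mathbb{C}$; in particular $x^*$ is simple, so $G'_{\alpha^*}(x^*)\neq 0$. Applying the implicit function theorem to $F(\alpha,x):=G_\alpha(x)$ at $(\alpha^*,x^*)$ produces a continuous local branch $\alpha\mapsto x(\alpha)$ with $x(\alpha^*)=x^*$ and $G_\alpha(x(\alpha))=0$; shrinking the neighborhood so that $x(\alpha)\in(a,b)\subset J$ exhibits an open neighborhood of $\alpha^*$ contained in $S$.

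The only delicate point is that both pieces of information in $(ii)$ are indispensable, which is really the content of the lemma: without $G_\alpha(a)\,G_\alpha(b)\neq 0$ a real root could enter $J$ through one of the endpoints, and without $\Delta_x(G_\alpha)\neq 0$ two complex conjugate roots could collide on the real axis inside $(a,b)$, giving birth to new real roots as $\alpha$ varies. These two failure modes are exactly what $(ii)$ rules out, and no further analysis (for instance of a possibly vanishing leading coefficient of $G_\alpha$) is needed, because the openness argument is purely local at each simple real root lying in the interior of $J$.
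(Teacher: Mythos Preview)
The paper does not actually prove this lemma; it is stated as a simplified version of a result from \cite{JD2} and used as a black box in the verification of the Poincar\'e--Miranda hypotheses. Your clopen argument on the connected interval $\Lambda$ is the natural proof and is correct.

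One small technical remark: you invoke the implicit function theorem to produce the branch $\alpha\mapsto x(\alpha)$, but the standard IFT requires $C^1$ dependence on $\alpha$, whereas the lemma only assumes the coefficients $g_i(\alpha)$ are continuous. This is easily bypassed: since $x^*\in(a,b)$ is a simple zero, $G_{\alpha^*}$ changes sign on some $[x^*-\varepsilon,\,x^*+\varepsilon]\subset(a,b)$; by continuity of $(\alpha,x)\mapsto G_\alpha(x)$ the sign of $G_\alpha$ at $x^*\pm\varepsilon$ persists for $\alpha$ close to $\alpha^*$, and the intermediate value theorem then furnishes a root of $G_\alpha$ in $(x^*-\varepsilon,\,x^*+\varepsilon)\subset J$. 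This gives openness of $S$ without any differentiability in $\alpha$, and the rest of your argument goes through verbatim.
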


We will prove that the first component of $g$ has no roots, and
alternates signs at the faces $m=\underline m$ and $m=\overline m$
of the box $\mathcal{I}_{1,5,11}$. Consider the function $
G_n(r)=g_1(\underline m,n,r)\cdot g_1(\overline m,n,r). $ We will
prove that $G_n(r)<0$ for all $(n,r)\in I_5\times I_{11}$ using
Lemma~\ref{zeros2} with $\Lambda=I_5$ and $J=I_{11}.$

By the Sturm's method it can be seen that the polynomial
$G_{\widehat {n}}(r)$ has only 6 different real roots and that none
of them is in the interval $I_{11}.$ Hence the hypothesis $(i)$ is
satisfied. Moreover $G_{\widehat {n}}(r)$ restricted to $I_{11}$ is
negative.

Proceeding in an analogous way, we obtain that $G_n(\underline
r)\cdot G_n(\overline r)$ has only 4 different real roots and  none
of them belongs to $I_{5}$. Hence $G_n(\underline r)\cdot
G_n(\overline r)\neq 0$ for all $n\in I_5$.  We also check that the
discriminant $\Delta_{r}(G_n(r))$, which is a polynomial of degree
192 in $n$, has  $37$ different  real roots. Again, we  prove that
they are not in $I_5$ and so, we are under the hypothesis $(ii)$ of
Lemma \ref{zeros2}. Hence by  this lemma   we get that $G_n(r)<0$
for all $(n,r)\in I_5\times I_{11},$ as we wanted to prove.

Doing similar arguments and computations,  we obtain that the second
and third component of $g$ do not vanish, and alternate signs on the
faces  $n=\underline n$ and $n=\overline n$, and $r=\underline r$
and $r=\overline r$ of $\mathcal{I}_{1,5,11}$, respectively. See
\cite[Chap. 5]{Llor} for more details. Thus  $g(m,n,r)$ verifies the
hypothesis of the Poincar\'e-Miranda theorem in
$\mathcal{I}_{1,5,11}.$ Hence the function $g$, and therefore the
function $f$, have at least  one zero in this box, which is unique
by construction.

\subsection{Analytic location of the $3$-periodic points}\label{ss:location}

In this section we use that the parameters $m$, $n$ and $r$
associated to each periodic point are located in the $12$ boxes
given in \eqref{E-supercaixes}, to obtain an analytic location of
them in the $(a,b)$-plane.

\begin{lem}\label{L-acotacions-final}
Let $m\in[\underline m ,\overline m ]$ and $r\in[\underline r
,\overline r ]$, and the functions $a(m,r)$ and $b(r)$ given by
\eqref{E-abcdef}, then:
\begin{enumerate}
\item[(i)] If $0<\underline r   \leq \overline r   $ then
$\underline a    :=\underline m ^3-\overline r   -2-{4}/{\underline
r   ^2}\leq a(m,r)\leq \overline m ^3-\underline r -2-{4}/{\overline
r ^2}=:\overline a     $ and $\underline b      :=\underline r
+{4}/{\overline r ^2}\leq b(r)\leq \overline r +{4}/{\underline r
^2}=:\overline b $

\item[(ii)] If $\underline r   \leq \overline r   <0$ then
$ \underline a    :=\underline m ^3-\overline r   -2-{4}/{\overline
r   ^2}\leq a(m,r)\leq \overline m ^3-\underline r
-2-{4}/{\underline r ^2}=:\overline a     $ and $\underline b
:=\underline r +{4}/{\underline r   ^2}\leq b(r)\leq \overline r
+{4}/{\overline r ^2}=:\overline b      $.
\end{enumerate}
\end{lem}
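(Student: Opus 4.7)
The plan is to observe that after cancelling the $r^2$ in the denominator, the formulas from \eqref{E-abcdef} simplify to
\[
a(m,r)=m^{3}-r-2-\frac{4}{r^{2}},\qquad b(r)=r+\frac{4}{r^{2}},
\]
so the bounds reduce to elementary interval arithmetic on the four elementary terms $m^{3}$, $\,-r$, $\,-2$ and $\,\pm 4/r^{2}$. The whole argument is therefore a monotonicity check: $m\mapsto m^{3}$ is strictly increasing on $\mathbb{R}$, $r\mapsto r$ is obviously increasing, and the only delicate factor is $r\mapsto 4/r^{2}$, whose monotonicity depends on the sign of $r$. This splits the statement into the two cases (i) and (ii).

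For case (i), with $0<\underline r\le r\le\overline r$, one has $\underline r^{2}\le r^{2}\le\overline r^{2}$, and hence
\[
\frac{4}{\overline r^{2}}\le \frac{4}{r^{2}}\le\frac{4}{\underline r^{2}}.
\]
Combining this with $\underline m^{3}\le m^{3}\le\overline m^{3}$ and $-\overline r\le -r\le -\underline r$, adding the three lower bounds gives $\underline a\le a(m,r)$ and adding the three upper bounds gives $a(m,r)\le\overline a$, with the explicit values stated in the lemma. The inequalities for $b(r)=r+4/r^{2}$ are obtained by the same two ingredients, without the $m^{3}$ term.

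For case (ii), with $\underline r\le r\le\overline r<0$, the key sign change is that now $|r|$ is \emph{decreasing} in $r$, so $r^{2}$ is decreasing and $1/r^{2}$ is increasing. Concretely, $\overline r^{2}\le r^{2}\le\underline r^{2}$ yields
\[
\frac{4}{\underline r^{2}}\le \frac{4}{r^{2}}\le \frac{4}{\overline r^{2}},
\]
which is exactly the reversal responsible for the different placement of $\underline r$ and $\overline r$ in the formulas for $\underline a,\overline a,\underline b,\overline b$ between items (i) and (ii). Assembling the term-by-term bounds then gives the claimed two-sided inequalities.

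There is no real obstacle: the main point is simply bookkeeping of signs when passing from a bound on $r$ to a bound on $4/r^{2}$, and the statement has been formulated precisely so that the naive componentwise bound is the one that appears. No optimization over the interior of the box is needed, because $a(m,r)$ and $b(r)$ are separately monotone in each variable on each of the two sign regimes, so the extrema are attained at the corners, exactly as written.
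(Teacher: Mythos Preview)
Your proof is correct and follows essentially the same approach as the paper: you simplify the expressions from \eqref{E-abcdef} to $a(m,r)=m^{3}-r-2-4/r^{2}$ and $b(r)=r+4/r^{2}$, and then combine the obvious monotonicity of $m\mapsto m^{3}$ and $r\mapsto -r$ with the sign-dependent monotonicity of $r\mapsto 4/r^{2}$ to obtain the stated interval bounds. The paper's proof is exactly this argument, written out only for case~(i) and leaving case~(ii) as ``follows similarly.''
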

\begin{proof}
(i) From \eqref{E-abcdef} we get that $a(m,r)=m^3-r-2-{4}/{r^2}$ and
$ b(r)=r+{4}/{r^2}.$ Notice that if $0<\underline r \leq r \leq
\overline r $ then $-\overline r \leq -r \leq -\underline r <0$ and
 $ -\frac{4}{\underline r ^2}\leq -\frac{4}{r^2}\leq
-\frac{4}{\overline r ^2}<0 .$ Moreover, $\underline m ^3\leq m^3
\leq \overline m ^3.$ By combining all these chains of inequalities
we get statement (i). The statement (ii) follows similarly.~\end{proof}

By using the inequalities in $(ii)$ of Lemma
\ref{L-acotacions-final}  we obtain, for example, that the
$3$-periodic point $(a,b)$ of $G$, associated to the parameters
$(m,n,r)\in \mathcal{I}_{1,5,11}$, satisfies $a\in[\underline a
,\overline a ]$ and $b\in[\underline b ,\overline b ]$ where

{\tiny \begin{align*}\underline a    &= -\frac {
1435686715756812113129131753291751212473714621389705932746390847605145815709035232062993533718832495489341
}{
56947609584619278435915236206283183709714097978506070511694763452312581699417401160811385506316156928
},\\
 \overline a     &=-\frac{
47044582301919219323098597682011430719430330620984084471100755414697990442772197382375529104298060913286874119
}{
1866059270868804515791575090678155019012542140400238364480469557193740712623709418953041434224970065510400
}
\end{align*}}
and {\tiny \begin{align*} \underline b      &={\frac {
3368785687756582636246263551756811406295236320753178521304454421527}{
10710654937528498667637446691242283113536911386660380934878003200}},\\
\overline b     & ={\frac {
6579659546399575461418490144259606329620802274396204966850496409}{
20919247924860348960190099800217926294342327605140376818548736}}.
\end{align*}}
By using the decimal approximation we get, {\scriptsize
\begin{align*}
&a\in[\underline a    ,\overline a     ]\simeq
[-25210.658115921519312682, -25210.658115921519312679],\\
&b\in[\underline b      ,\overline b      ]\simeq
[314.5265819322469464743, 314.5265819322469464749],
\end{align*}}
where we observe that $\max(\overline a     -\underline a
,\overline b      -\underline b      )\simeq 2.8\times 10^{-18}.$

 Applying  Lemma \ref{L-acotacions-final} to each of the $12$ boxes \eqref{E-supercaixes}, we obtain
rational bounds for the components of each periodic point of minimal
period $3$, that are summarized in the following tables, where only
the decimal expression of some significative digits is given. In all
the cases the maximum length of the interval localizing the
3-periodic points is smaller than $10^{-17},$ so the given expression of both ends of the intervals coincide.

{\scriptsize
\begin{center}
\begin{tabular}{|c|c|c|}
\hline
$\mathcal{O}_1$ & $a$ & $b$ \\
\hline
 & -25210.658115921519313 & 314.52658193224694647 \\
\hline
 & -11.080089229288244821& -29.194152462502174029 \\
\hline
 & 1.0164106270635353803& -3.0178440371837045505  \\
\hline
\end{tabular}
\begin{tabular}{|c|c|c|}
\hline
$\mathcal{O}_2$ & $a$ & $b$ \\
\hline
 & -25080.503857555317449&  314.36115078061939834\\
\hline
 & -11.094342178650567807& -29.143225143670723223 \\
\hline
 & 1.0179782228602330827&  -3.0165421366176918413 \\
\hline
\end{tabular}
\end{center}}

{\scriptsize
\begin{center}
\begin{tabular}{|c|c|c|}
\hline
$\mathcal{O}_3$ & $a$ & $b$ \\
\hline
 & -550.35997876621370288& 84.580855473468510676 \\
\hline
 &-13.613164340185764400 & -7.6737642167841728949 \\
\hline
 & 0.13590789992610542444&  -2.1255835876361107899 \\
\hline
\end{tabular}
\begin{tabular}{|c|c|c|}
\hline
$\mathcal{O}_4$ & $a$ & $b$ \\
\hline
 &-500.96942815695686889 & 80.145842594816842809 \\
\hline
 &-13.481597649423988848 &  -7.4104176831057891201 \\
\hline
 &  0.088325991394389446424&   -2.0994294342645985249\\
\hline
\end{tabular}
\end{center}}

\section{Dynamics associated to the saddle point $P_2$}
\label{S-Num-An}

In this section we study the invariant sets of the saddle point
$P_2$. First,  Proposition \ref{P-atraccio-g}  characterizes the
stable set of $P_2.$ We will also give numerical evidences of the
existence of homoclinic orbits, that is, initial conditions on the
local unstable manifold, whose orbit converges to $P_2$. Finally, we
will provide numerical evidences of the existence of points on the
local unstable manifold, whose orbits end in the non-definition set.

\subsection{The stable set of $P_2$}\label{SS-stab-set}

We denote the \emph{stable set} of the fixed point $P_2$ as
$W^s(P_2)=\{(a,b)\in \R^2:\, \lim_{n\to\infty} G^n(a,b)=P_2\}.$ This set is
not necessarily a manifold. For hyperbolic points, like $P_2$ there
is also the so called  \emph{local stable manifold}
$W^s_{\textrm{loc}}(P_2)\subset W^s(P_2),$ that is an actual
manifold and is only defined in a small neighborhood of the fixed
point. Our first result characterizes totally the stable set of
$P_2$ and its local stable manifold.

 \begin{propo}\label{P-atraccio-g} It holds that
 \[
W^s(P_2)=\big(L_2\cup\{(a,b)\in\R^2:\, \exists\, n\ge0: G^n(a,b)\in
R_1\cup C_1\}\big)\setminus\{(-1,-1)\},
 \]
where $R_1=\{a-b=0\}$. Moreover, $W^s_{\textrm{loc}}(P_2)$ is
contained in $L_2.$
 \end{propo}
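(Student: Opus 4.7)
The plan is to establish two claims: that every orbit starting on $L_2'=L_2\setminus\{(-1,-1)\}$ converges to $P_2$, and that the full stable set is obtained as the union of $L_2'$ with its preimages under $G$. The local manifold statement then follows as a corollary.

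I would start by using \eqref{E-Rinv} to confirm that $L_2'$ is positively invariant, so $G|_{L_2'}$ is a one-dimensional dynamical system on a smooth algebraic curve. I would then parametrize $L_2$ (for instance via a rational parametrization of the algebraic component $L_2$ of $R(a,b)=0$) and reduce the restricted dynamics to an interval map $\varphi$ whose only fixed point is the parameter value of $P_2$. Hyperbolicity of the saddle $P_2$ guarantees that the eigenvalue of $DG(P_2)$ tangent to $L_2$ has modulus strictly less than one, so $P_2$ attracts locally along $L_2$. To upgrade this to global attraction I would show, using the explicit algebraic form of $\varphi$, that $\varphi$ is monotone on each arc of $L_2'$ and admits no other fixed or periodic orbits, forcing every orbit to converge to $P_2$. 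For the local manifold statement, since $L_2$ is a smooth one-dimensional $G$-invariant manifold through the hyperbolic saddle $P_2$ on which $P_2$ is attracting, its tangent line at $P_2$ must coincide with the stable eigenspace of $DG(P_2)$; uniqueness of the local stable manifold then gives $W^s_{\mathrm{loc}}(P_2)\subset L_2$.

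For the global characterization, one inclusion is immediate: if $G^n(a,b)\in L_2'$ for some $n\ge 0$, then the first step places $(a,b)$ in $W^s(P_2)$. For the converse, I would use the local statement to argue that any orbit in $W^s(P_2)$ must eventually enter a neighborhood of $P_2$ inside $L_2$ and so lands on $L_2'$ in finite time. Tracing preimages via \eqref{E-Rinv}: if $G(p)\in L_2$ and $p\notin L_1\cup L_2$, then $(a-b)^2$ must vanish at $p$, i.e.\ $p\in R_1$; since $L_1\subset\mathcal{A}$ lies in the basin of $P_1$, preimages of $L_2'$ off the resolvent are confined to $R_1$, and inductively an orbit converging to $P_2$ must, prior to its entry into $L_2$, remain in $R_1\cup C_1$ (the only region outside the basin of $P_1$, together with the diagonal on which preimages of the resolvent occur). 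Excluding the forbidden point $(-1,-1)\in\{a+b+2=0\}$ then yields the stated equality.

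The main obstacle will be the global attractivity on $L_2'$: local hyperbolicity gives convergence only near $P_2$, so ruling out other invariant structures on the unbounded, punctured curve $L_2'$ requires a careful monotonicity or Lyapunov argument in a good parametrization of $L_2$, together with control of the behaviour at the ends of $L_2'$ to ensure no orbit escapes to infinity or accumulates at the excluded point $(-1,-1)$.
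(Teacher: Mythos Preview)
Your overall plan matches the paper's: use \eqref{E-Rinv} for invariance and the preimage structure, parametrize $L_2$ rationally, reduce $G|_{L_2'}$ to a one-dimensional map, and prove that the unique fixed point is a global attractor. The local manifold statement and the backward tracing of preimages via \eqref{E-Rinv} are handled just as you describe.

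The gap is in your proposed mechanism for global attractivity. The induced one-dimensional map is \emph{not} monotone on either arc of $L_2'$. Using the parametrization $P(t)=\big(\tfrac{t^3+4}{t^2},\tfrac{t^3+16}{4t}\big)$ with $L_2$ corresponding to $t\in(-\infty,0)$, the conjugated map
\[
g(t)=\sqrt[3]{4}\,\frac{t}{(t+2)^2}\left(\frac{(t^2+4)(t+2)^2}{t^2}\right)^{2/3}
\]
has relative maxima at $t=-4\pm 2\sqrt{3}$, one on each of the two arcs $(-\infty,-2)$ and $(-2,0)$; so $g$ is unimodal on each arc, and a monotonicity argument cannot work. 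Moreover, for a non-invertible unimodal map, ``no other fixed or periodic orbits'' alone does not force convergence; one needs additional structure.

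What the paper actually does for this step is a trapping-plus-nested-intervals argument, and it crucially imports Theorem~\ref{T-main}(b). First, both arcs are mapped into $(-\infty,-4]\subset(-\infty,-2)$, so after one iterate orbits live on the left arc. Next, using $g(t)>t$ for $t<p$ (where $p$ is the fixed point), every orbit enters an invariant interval $[m,\ell]$ with $m=-4-2\sqrt{3}$ and $\ell\in(p,-2)$ chosen so that $g(\ell)=m$; on $[m,\ell]$ the map is strictly decreasing. Then the images $g^{2k}([m,\ell])=[m_k,\ell_k]$ are nested, with $\{m_k\}$ increasing and $\{\ell_k\}$ decreasing; both sequences must converge to fixed points of $g^2$. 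Since $G$ has no $2$-periodic points by Theorem~\ref{T-main}(b), the only fixed point of $g^2$ is $p$, and the intervals collapse to $\{p\}$. This use of the previously established absence of $2$-periodic orbits is the ingredient your outline is missing; you should either invoke it at this point or find an independent way to rule out a $2$-cycle for $g$ on $[m,\ell]$.
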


 \begin{proof} Observe that Equation (\ref{E-Rinv}) implies that the only initial conditions
 mapped by $G$  to the resolvent curve are the points of the straight line
 $R_1,$ except $(-1,-1).$
  Hence, to prove the proposition it suffices to show that $L_2'=L_2\setminus\{(-1,-1)\}\subset W^s(P_2).$
Let us prove this inclusion.

   Recall that $L_2\subset\{R(a,b)=0\}.$ The
resolvent curve $R(a,b)=0$ is algebraic and has genus 0, so it
admits \emph{rational parametrizations}. For instance, if we define
 $P(t)=(P_1(t),P_2(t))=\left(\frac{t^3+4}{t^2},\frac{t^3+16}{4t}\right)$ it holds that
 $R(P_1(t),P_2(t))\equiv0.$ This parametrization has been already   was also used in \cite[Thms 3 and 4]{chamb}.
The component $L_2$ corresponds with
 $t\in(-\infty,0)$, and $L_1$ with
 $t\in(0,\infty)$. Some computations give
  $P^{-1}(a,b)={\frac{4\,({a}^{2}-3\,b)}{{a}^{2}b-4\,{b}^{2}+3\,a}}.$
Hence, to study the dynamics of $G$ on the component
$L_2$ we need to study the one-dimensional map
 $$
 g(t)=P^{-1}\circ G\circ P(t)=\sqrt[3]{4}\,\frac{t}{(t+2)^2}\,
 \left(\frac{(t^2+4)(t+2)^2}{t^2}\right)^{2/3}, \mbox{ for }t\in
 \mathcal{I}:=(-\infty,0)\setminus\{-2\},
 $$ see  also \cite[Thm 4]{chamb}.   Observe that $t=-2$
 corresponds with $(a,b)=(-1,-1)$ which belongs to the non-definition line $\{a+b+2=0\}$ and is excluded in our statement.  The  map $g(t)$ has a
 unique fixed point in $\mathcal{I}$
$$
p=-\frac{1}{3}\,{\frac{4\,C+\sqrt [3]{2}{C}^{2}+8\,{2}^{2/3}}{C}}\simeq -4.4111,\,\mbox{ where } C=\sqrt [3]{86+6\,\sqrt {177}}.
$$
Our objective is to prove  that this fixed point is a global attractor of  $g(t)$ in $\mathcal{I}$.

First we summarize some   features of $g(t)$ in $\mathcal{I}$ that
we will need (see Figure 3): (i) It has only two relative extremes
(maximum) in $\mathcal{I}$ given by
 $t=-4\mp 2\sqrt{3}$ ($t\simeq -7.4641$ and $t\simeq -0.5359$ respectively), and such that $g(-4\mp 2\sqrt{3})=-4$.
We denote $m:=-4- 2\sqrt{3}$. (ii) It holds that
$\lim\limits_{t\to-2^{\pm}}g(t)=\lim\limits_{t\to0^{-}}g(t)=-\infty$.
(iii) It  also holds that $\lim\limits_{t\to-\infty}g(t)=-\infty$.
(iv) For all $t\in(-\infty,p)$, we have $g(t)>t$.
 (v) The map $g$ has not  $2$-periodic points  as  a
consequence of  Theorem \ref{T-main} (b).

 \begin{center}
  \includegraphics[scale=0.42]{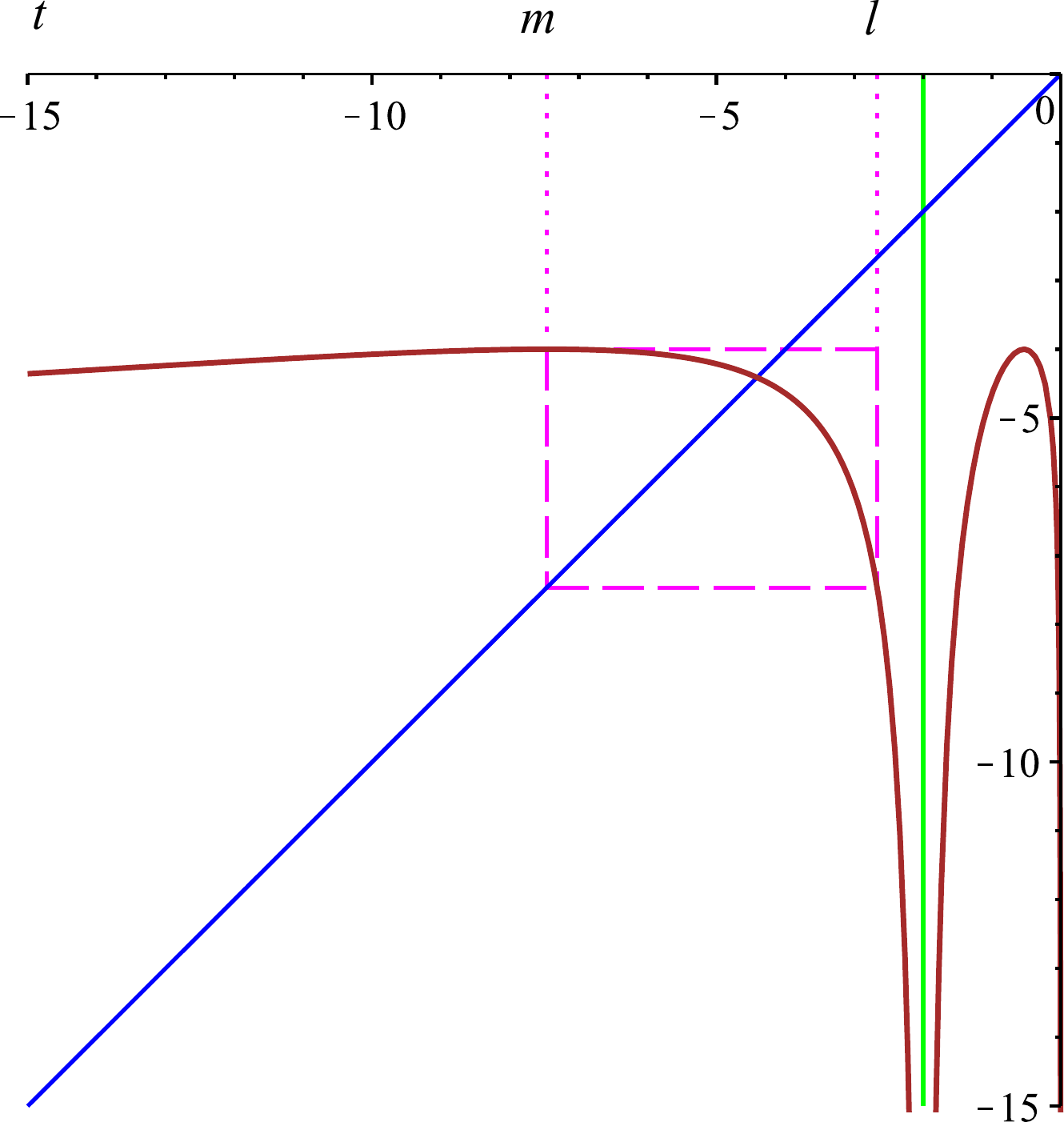}

 Figure 3. Graph of the function  $g(t)$ in  $\mathcal{I}$.
 \end{center}

The proof has three steps, namely (A)--(C):
 (A) From  the properties (i) and (ii), we conclude $g\left((-2,0)\right) = (-\infty,-4]$.

\noindent (B) Using (i) and (ii) again, we conclude that
 $g\left((-\infty,-2)\right) = (-\infty,-4]$,
 hence the interval $(-\infty,-2)$ is invariant by  $g$.
We will study the dynamics in this interval.

Let $\ell\in(p,-2)$ be the unique value in this interval such that  $g(\ell)=m$ ($\ell\simeq -2.6675$).
By using the monotony of $g$, the interval $[m,\ell]$ is invariant. Indeed, $g([m,\ell])=[g(\ell),g(m)]=[m,-4]\subset[m,\ell],$
see again Figure 3. Now we claim that \emph{for all $t\in (-\infty,m)\cup(\ell,-2)$
there exists $n>0$ such that $t_n=g^n(t)\in [m,\ell]$.}

Indeed, by the monotonicity of  $g$ in $(\ell,-2)$ we have that
$g((\ell,-2))=(-\infty,m)$. Since for all $t\in(-\infty,m)$, we have
$g(t)<-4<\ell$,  then $g(t)\notin (\ell,-2)$, hence
$g(t)\in(-\infty,\ell)$. We only need, therefore to prove the claim
in $t\in (-\infty,m)$. We proceed by contradiction. Consider
$t_0\in(-\infty,m)$  and suppose that none iterate $t_n\in[m,\ell]$,
so that for all $n>0$ we have $t_n\in(-\infty,m)$. From (iv), the
sequence $\{t_n\}$ is increasing, and as we are assuming that it is
bounded from above by $m$, the sequence must have a limit that, by
continuity,  must be a fixed point, which is a contradiction because
there is no fixed point in  $(-\infty,m]$. Hence the claim is
proved, and we only have to study the dynamics of $g$ in
$[m,\ell]$.

 \noindent (C) We study now the dynamics on the interval $[m,\ell]$.
We denote $m_0:=m$ and $\ell_0:=\ell$, and consider the sequences
\begin{equation}\label{E-g2k}
\begin{array}{l}
 \ell_k=g(m_{k-1})=g^2(\ell_{k-1}) \,\mbox{ and }
   m_k=g(\ell_k)=g^2(m_{k-1}).
\end{array}
\end{equation}
Observe that since $g$ is strictly decreasing in $[m,\ell]$, for $k\geq 1$ we obtain
$$
\left[m_{k-1},{\ell}_k\right]:= g^{2k-1}\left([m ,\ell]\right) \,\mbox{ and }\,
\left[m_{k},{\ell}_k\right]:= g^{2k}\left([m ,\ell]\right).
$$

We will prove that $\{m_k\}$ and $\{\ell_k\}$ are increasing and
decreasing sequences, respectively, that converge to the fixed point
$p$, thus proving the result.

Some computations show that $\ell_1=g(m)=-4<\ell=\ell_0$, and that $m_0=m<m_1=g(\ell_1)=g(-4)$. We proceed by induction,
assuming that $m_{k-1}<m_k$ and $\ell_k<\ell_{k-1}$.
Using  \eqref{E-g2k}, as
 $g$ is decreasing and $m_{k-1}<m_k$, we have $
\ell_k=g(m_{k-1})>g(m_k)=\ell_{k+1}.$
Likewise, since $\ell_{k+1}<\ell_k$ we have
$
m_{k+1}=g(\ell_{k+1})>g(\ell_k)=m_k$. Therefore, the sequences
 $\{m_k\}$ and $\{\ell_k\}$ are monotonous increasing and
  decreasing, respectively. Since both sequences are bounded,
   and  using the expressions in \eqref{E-g2k}, we have that both converge to a fixed point of
 $g^2$. But since there are not  $2$-periodic points, except
  the fixed point  $p$, we have $\lim\limits_{k\to \infty} m_k=\lim\limits_{k\to \infty} \ell_k=p.$~
 \end{proof}

\subsection{Local expression of the unstable manifold}

In order to search numerically the homoclinic points  associated to
$P_2$, we compute an approximation of the local unstable manifold of
the saddle point $P_2=(a_2,b_2)$. We consider the change
 $u=a-a_2$ and $v=b-b_2$, which brings  $P_2$ to the origin $(0,0)$. We also consider the map
 $\widetilde{G}(u,v)=G(u+a_2,v+b_2)-(a_2,b_2)$ which is conjugate with $G$, and
  the linear map given by
 $H(r,s)=L \cdot (r,s)^t$,
 where $L$  is the matrix formed by the eigenvectors of
 $\mathrm{D}G(P_2)$. Hence
 $$L^{-1} \cdot \mathrm{D}G(P_2) \cdot L=
   \left( \begin{array}{cc}
   \lambda_1 & 0 \\ 0 & \lambda_2
   \end{array} \right),$$
 where $\lambda_1$ and $\lambda_2$ are the eigenvalues of  $\mathrm{D}G(P_2)$, given by
        {\footnotesize \begin{align*}
         \lambda_1&=\frac{-1}{384}\left(\left(7\,
         \sqrt {177}-111\right) \,A^{2/3}  +\left(8\,\sqrt {177}-264\right)\,A^{1/3}-768\right)\simeq 7.0701,\\
         \lambda_2&={\frac{-1}{1152}}
         \left(\left(\sqrt {177}
             -25\right)\, A^{2/3}+\left(8\,\sqrt {177}
             -136\right)\,A^{1/3}+1280\right)\simeq -0.4470,
        \end{align*}}
 where $A:=172+12\,\sqrt {177}$.

We compute the Taylor development of the unstable manifold
associated to the origin of the map $ F(r,s)=H^{-1} \circ
\widetilde{G} \circ H(r,s)=\left(\lambda_1
r+O(||(r,s)||^2),\lambda_2 s+O(||(r,s)||^2\right). $ \emph{The
expression of the local unstable manifold $W^u_{\textrm{loc}}(0,0)$
  of $F$ is $s=w(r)=w_2 r^2+w_3 r^3+w_4 r^4+w_5 r^5+O(r^6),$
  where}
  
{\footnotesize \begin{align*}
   &w_2\simeq- 0.00259107002218996975513519324145,\,
   &&w_3\simeq- 0.00013220529650666650558465802906, \\
   &w_4\simeq- 0.00000889870356674847560384348601, \,
   &&w_5\simeq- 0.00000069374812274441343473691330.
  \end{align*}}
These coefficients have been computed using the formulas  in Lemma
\ref{var_inest_expr} of the Appendix, by using floating-point
arithmetic with $60$ digits in the mantissa.
 Observe that we can parametrize  $W^u_{\textrm{loc}}(P_2)$
using the function  $s=w(r)$, by considering
\begin{equation}\label{E-paramWu}
  r\longrightarrow H(r,w(r))+P_2,\, \mbox{ for } r\simeq 0.
\end{equation}
We use this parametrization to obtain Figures 4 and 5.

Finally, from the expression of the local unstable  manifold of the
origin for the map $F$, we obtain that
 the points  $(a,b)\in W^u_{\mathrm{loc}}(P_2)$ satisfy
$
  w(H_1^{-1}(a-a_2,b-b_2))-H_2^{-1}(a-a_2,b-b_2)=0,
$
that can be approximated by
$$
 D_1(a,b):=\sum\limits_{i=2}^5 w_i \big(H_1^{-1}(a-a_2,b-b_2)\big)^i -H_2^{-1}(a-a_2,b-b_2)=0,
$$
where $D_1(a,b)$ is a polynomial of degree 5 that we do not
explicite for the sake of shortness, see \cite[Chap. 5]{Llor} for
more details.

\subsection{Computation of the homoclinic point}

Previous to find a homoclinic point we remember that, by Proposition
\ref{P-atraccio-g}, any point $(a,b)$ such that there exists
$k\in\N$  verifying  $G^k(a,b)\in R_1=\{a-b=0\} \cap
C_1\setminus\{(-1,-1)\}$ belongs to the stable set of $P_2$, since
$G^{k+1}(a,b)\in L_2$.  In this sense, we have graphically observed
that, except for the  point $P_2$, there is no intersection of
$W^u_{\mathrm{loc}}(P_2)$ with the curve $L_2$.  Also we have
observed neither  intersections of $W^u_{\mathrm {loc}}(P_2)$ with
$R_1=\{a-b=0\}$ at the region $C_1$, nor points $(a,b)\in
W^u_{\textrm{loc}}(P_2)$ such that $G(a,b)\in R_1\cup C_1$, but we
have seen the existence of at least one point such that $G^2(a,b)\in
R_1\cup C_1$. See Figure 4.

 Imposing  $G_1(a,b)-G_2(a,b)=0$, we find that the points $(a,b)$ such that  $G(a,b)\in R_1$ satisfy:
  $${D_2}(a,b):=\left( ab+5\,a+5\,b+9 \right) ^{3}-
  \left( a+b+6 \right) ^{3} \left(a+b+2 \right) ^{2}=0.$$
 Hence, the points such that $G^2(a,b)\in R_1$  are those satisfying ${D_2}
 (G(a,b))=0$, or equivalently  $D_3(a,b):=\mathrm{numer}({D_2}
 (G(a,b)))=0,$
 where $D_3(a,b)$  is a polynomial of degree $10$ in the variable
 $m=(a+b+2)^{2/3}$ with $22$ terms, that we omit here.

Therefore, the homoclinic point  $P$ must verify the system
 $\{D_1(a,b)=0, D_3(a,b)=0\}$.
We solve it numerically, using floating-point arithmetic with $60$
digits in the mantissa, and we get a solution in
$[-6,-5]\times[3.5,5]$, given by $P=(p_1,p_2)$ where 

{\footnotesize
\begin{align*}
  p_1&\simeq-5.67750144031789435343891174392876990152177028290023619512062, \\
  p_2&\simeq 4.10574868714920935493626045239900450809925741194290963919902 .
 \end{align*}}
By using the parametrization of $W^u_{\mathrm{loc}}(P_2)$ given by
\eqref{E-paramWu}, we find that the point
 $P$ corresponds with the parameter
 $r\simeq -1.48202 15208 77494 33523.$

By construction, $G^3(P)$ which must lie on $L_2$. A computation
shows that the absolute error when we evaluate $R(a,b)$ on this
point, is $\big|R\big(G^3(p)\big)\big|\simeq 10^{-58}$. Accordingly,
  the point $P$ exhibits, numerically, a homoclinic behavior.

As can be seen in Figure 4, there exists another solution  of
$\{D_1(a,b)=0, D_3(a,b)=0\}$ in  $[-8,-6]\times[3.5,5]$, given by
$\tilde{P}=(\tilde{p}_1,\tilde{p}_2)$ where 

{\footnotesize
\begin{align*}
 \tilde{p}_1&\simeq-7.32664831286596004531700787733138125161658087249633041273728, \\
  \tilde{p}_2&\simeq 4.26205920129322448141657538934356322617112224124511704493689.
 \end{align*}
} The point $\tilde{P}$ corresponds to the parameter value $r\simeq
-3.14702 44917 79071 04545$.

 \begin{center}
 \begin{minipage}{0.5\textwidth}
  \includegraphics[scale=0.37]{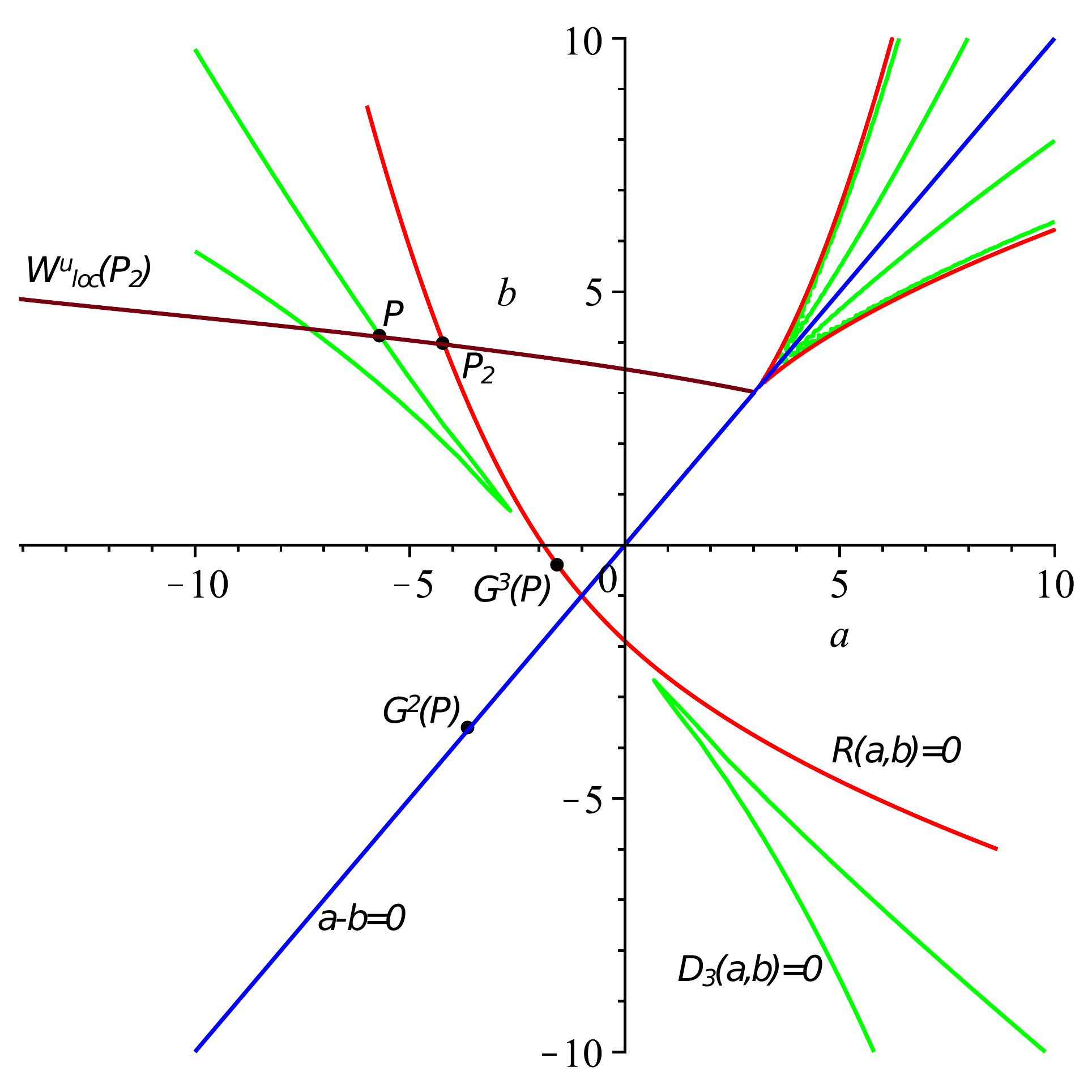}
  \end{minipage}
\begin{minipage}{0.47\textwidth}
  Figure 4. Location of the points $P, G^2(P)$ and
  $G^3(P)$ on $W^u_{\textrm{loc}}(P_2)$ (brown), the curve  $D_3(a,b)=0$ (green),  the diagonal $R_1$ (blue) and
  the resolvent curve (red), respectively. The point $G(P)$ is outside the image.
\end{minipage}
 \end{center}

\subsection{Computation of points in $W^u_{\mathrm{loc}}(P_2)\cap\mathcal{F}$}

To find a point in $W^u_{\mathrm{loc}}(P_2)\cap\mathcal{F}$, we solve numerically the system $
 \left\{D_1(a,b)=0,a+b+2=0\right\},$
obtaining the point $Q=(q_1,q_2)$, where 

{\footnotesize
\[
  q_1\simeq-6.15163017029193114270539883292276699558057876233980350720282,
  \quad q_2=-2-q_1,
\]}
This point corresponds with the parameter $r\simeq -1.96025 81538
61616 87597 .$

To find another point with a parameter value closer to zero (hence giving a better evidence of really being in
$W^u_{\mathrm{loc}}(P_2)$), we find a point $Q_{-1}$ such that $G(Q_{-1})=Q$. The points $(a,b)$ such that
$G(a,b)\in\{a+b+2=0\}$,
 verify
 $$D_4(a,b):=5\,a+5\,b+ab+9+ \left( a+b+6 \right) \sqrt [3]{ \left( a+b+2 \right) ^
  {2}}+2\,\sqrt [3]{ \left( a+b+2 \right) ^{4}}=0.$$
 By solving numerically the system
 $\left\{D_1(a,b)=0,D_4(a,b)=0\right\},$
we find $Q_{-1}:=(z_1,z_2)$ where 

{\footnotesize
\begin{align*}
  z_1&\simeq-4.43931733951927306713914976146761550810750048579478327758904, \\
  z_2&\simeq 3.98185284365899589972467095578564600569428848801825836848384.
 \end{align*}}
The point $Q_{-1}$ has an associated parameter value
 $r\simeq -0.23505 95678 85428 61108.$ The location of the above points is shown in Figure 5.

Observe that the parameters of the points $Q$, $P$ and $Q_{-1}$ are
interspersed, so the points are also interspersed in
$W^u_{\mathrm{loc}}(P_2)$. An analytic proof of this fact would show
that arbitrarily near of
 $P_2$ there are homoclinic points and points in
$\mathcal{F}$.

 \begin{center}
 \begin{minipage}{0.5\textwidth}
  \includegraphics[scale=0.37]{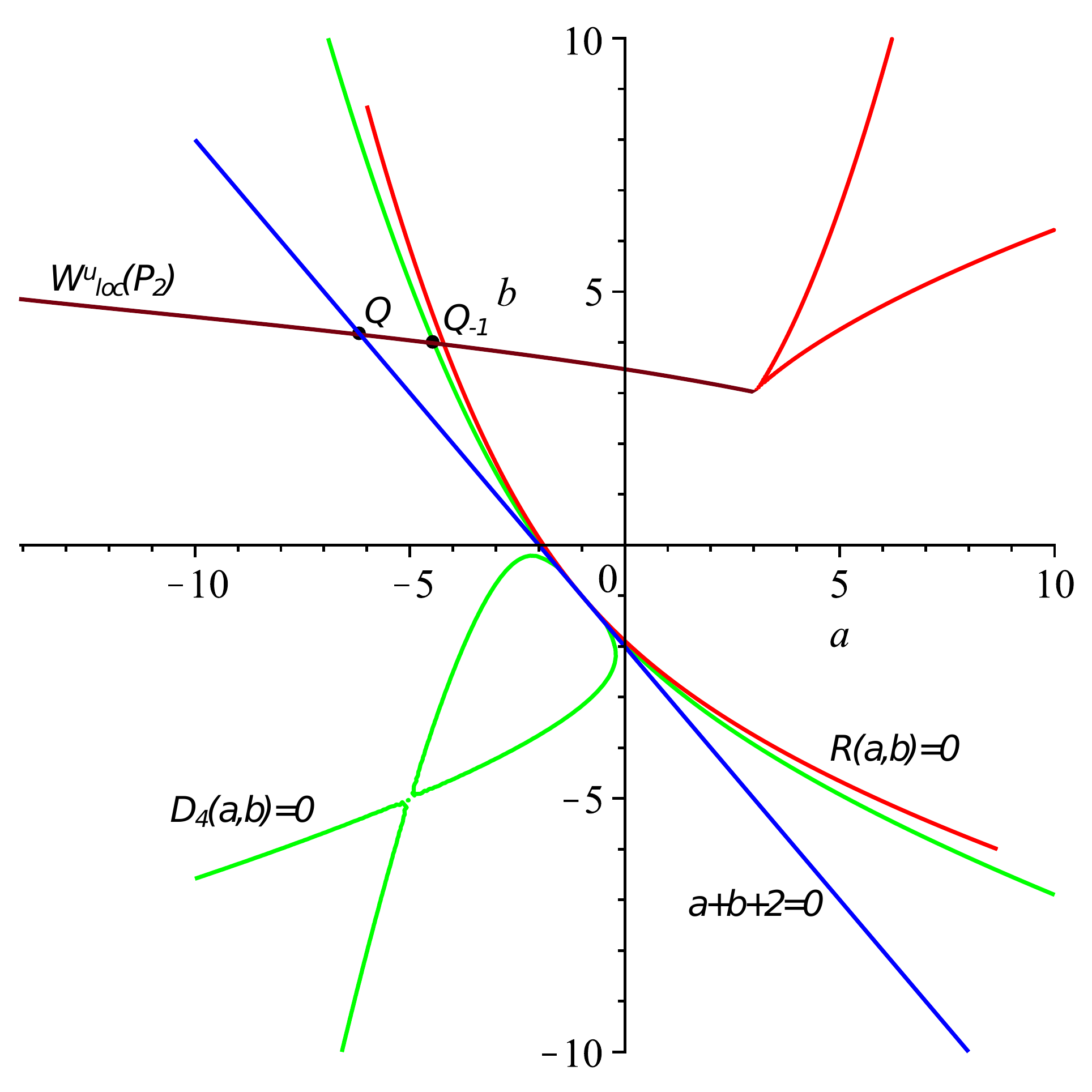}
  \end{minipage}
\begin{minipage}{0.47\textwidth}
 Figure 5. Location of the points $Q$ and $Q_{-1}$
  in $W^u_{\mathrm{loc}}(P_2)\cap\mathcal{F}$; the curve $D_4(a,b)=0$ (green);
  the line $a+b+2=0$ (blue); and the resolvent curve
  (red), respectively.
\end{minipage}
 \end{center}

\section*{Appendix: Local unstable manifold near a hyperbolic saddle point}

 \begin{lem}\label{var_inest_expr}
Consider the smooth map, defined in a neighborhood of the origin \
$\mathcal{U}$:
$$
F(x,y)=\left(\lambda x+ \sum_{{i+j}={2}}^5{f_{i,j}
  x^i y^j}+O(||(x,y)||^6),\mu y+ \sum_{{i+j}={2}}^5{g_{i,j} x^i y^j}+O(||(x,y)||^6)\right)
$$
  where $\arrowvert \lambda \arrowvert > 1 >
  \arrowvert\mu\arrowvert$, so that the origin is a hyperbolic saddle. Let
  $y=w(x)=\sum_{k=2}^5{w_k x^k}+O(x^6)$ be the expression of the local
  unstable manifold  in a neighborhood of the origin. Then:
  $$ w_2=\frac{g_{2,0}}{\lambda^2-\mu},\quad w_3={\frac
{{\lambda}^{2}g_{{3,0}}-2\,\lambda\,f_{{2,0}}g_{{2,0}}-\mu\,g_{
{3,0}}+g_{{1,1}}g_{{2,0}}}{ \left( {\lambda}^{2}-\mu \right)  \left(
{ \lambda}^{3}-\mu \right) }} ,\quad
  w_4=\frac{W_4}{ \left( {\lambda}^{2}-\mu \right) ^{2} \left( {\lambda}^{3}-\mu
 \right)  \left( {\lambda}^{4}-\mu \right)} ,
  $$
 where
\noindent\begin{align*} W_4&=  g_{{4,0}}{\lambda}^{7}+ \left(
-3\,f_{{2,0}}g_{{3,0}}-2\,f_{{3,0}}g_{{ 2,0}} \right) {\lambda}^{6}+
\left( 5\,f_{2,0}^2g_{{2,0}}-2\,g_{
{4,0}}\mu+g_{{2,0}}g_{{2,1}} \right) {\lambda}^{5}\\
&+ \left(  \left( 6\,
f_{{2,0}}g_{{3,0}}+2\,f_{{3,0}}g_{{2,0}}-g_{{4,0}} \right) \mu-2\,f_{{
1,1}}g_{2,0}^2-3\,f_{{2,0}}g_{{1,1}}g_{{2,0}}+g_{{1,1}}g_{{3,0}}
 \right) {\lambda}^{4}\\
 &+ \left( g_{{4,0}}{\mu}^{2}+ \left( -5\,
 f_{2,0}^{2}g_{{2,0}}+2\,f_{{3,0}}g_{{2,0}}-g_{{2,0}}g_{{2,1}} \right) \mu-
2\,f_{{2,0}}g_{{1,1}}g_{{2,0}}+g_{{0,2}}g_{2,0}^2 \right) {
\lambda}^{3}\\
&+ \left(  \left( -3\,f_{{2,0}}g_{{3,0}}+2\,g_{{4,0}}
 \right) {\mu}^{2}+ \left( f_{2,0}^2g_{{2,0}}+3\,f_{{2,0}}g_{{1,
1}}g_{{2,0}}-2\,g_{{1,1}}g_{{3,0}}-g_{{2,0}}g_{{2,1}} \right) \mu\right.\\
&\left.+g_{1,1}^{2}g_{{2,0}} \right) {\lambda}^{2}+ \left( -2\,f_{{3,0}}g_{{2,0
}}{\mu}^{2}+ \left( 2\,f_{{1,1}}g_{2,0}^2+2\,f_{{2,0}}g_{{1,1}}g
_{{2,0}} \right) \mu \right) \lambda\\
&-g_{{4,0}}{\mu}^{3}+ \left( -f_{{2,0}}^{2}g_{{2,0}}+g_{{1,1}}g_{{3,0}}+g_{{2,0}}g_{{2,1}} \right) {\mu
}^{2}+ \left( -g_{{0,2}}g_{2,0}^2-g_{1,1}^2g_{{2,0}}
 \right) \mu
\end{align*}
and
$$
  w_5=\frac{\sum_{i=0}^{13} p_i\,\lambda^i}{\left( {\lambda}^{2}-\mu \right) ^{3} \left( {\lambda}^{3}-\mu
 \right)  \left( {\lambda}^{4}-\mu \right)  \left( {\lambda}^{5}-\mu
 \right)},
  $$
 where $p_i, i=1,2,\ldots,13$ are polynomials in the other variables
 of $F$ that we skip, although we have used, for the sake of
 shortness (they are given in \cite[Chapter 5]{Llor}).
\end{lem}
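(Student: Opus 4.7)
The plan is to rely on the fact that near a hyperbolic saddle, the stable/unstable manifold theorem guarantees the existence of a smooth local unstable manifold tangent to the eigenspace of the expanding eigenvalue. Since the linear part of $F$ is already diagonal with $\lambda$ and $\mu$ on the diagonal, the unstable eigenspace is the $x$-axis and $W^u_{\textrm{loc}}$ is locally a graph $y=w(x)$ with $w(0)=w'(0)=0$. So the existence of a power series $w(x)=w_2x^2+w_3x^3+w_4x^4+w_5x^5+O(x^6)$ is for free; what we need to derive are the coefficients.

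The tool for this is the invariance of $W^u_{\textrm{loc}}$ under $F$, which translates into the functional equation
\begin{equation*}
F_2\bigl(x,w(x)\bigr)=w\bigl(F_1(x,w(x))\bigr),
\end{equation*}
valid for $x$ in a neighborhood of $0$. Plugging in the ansatz $w(x)=\sum_{k\ge 2}w_k x^k$ and the Taylor expansions of $F_1$ and $F_2$, both sides become formal power series in $x$ whose coefficients we match order by order from $x^2$ up to $x^5$.

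The key structural observation is that, at order $x^k$, the leading contribution on the left comes from $\mu w_k$ (the linear term $\mu y$ of $F_2$ applied to the $w_k x^k$ part of $w$), while on the right the leading contribution comes from $w_k\bigl(\lambda x\bigr)^k=w_k\lambda^k x^k$ (from $w_k F_1^k$, using the dominant linear term $\lambda x$ of $F_1$). All remaining $x^k$ contributions involve only the coefficients $w_2,\ldots,w_{k-1}$ and the $f_{ij}$, $g_{ij}$. Consequently, equating the two sides yields
\begin{equation*}
(\lambda^k-\mu)\,w_k=R_k(f_{ij},g_{ij},w_2,\ldots,w_{k-1}),
\end{equation*}
a triangular linear system whose denominators $\lambda^k-\mu$ do not vanish since $|\lambda|>1>|\mu|$ implies $|\lambda^k|>|\mu|$ for every $k\ge 2$. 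The formula $w_2=g_{2,0}/(\lambda^2-\mu)$ is immediate: at order $x^2$, $F_2(x,w(x))=g_{2,0}x^2+\mu w_2 x^2+O(x^3)$ while $w(F_1)=w_2\lambda^2 x^2+O(x^3)$. The higher $w_k$ follow by solving the corresponding equation, substituting the previously determined $w_j$, and clearing the common denominator.

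The only real obstacle is bookkeeping: to reach order $x^5$ one must expand $F_1(x,w(x))^k$ for $k=2,3,4,5$ together with the contributions from every monomial $f_{ij}x^iw(x)^j$ and $g_{ij}x^iw(x)^j$ with $i+j\le 5$, which quickly produces a large number of terms. This is mechanical and best delegated to a computer algebra system; the displayed formulas for $w_4$ and $w_5$ are exactly the simplified form of the resulting rational expressions. No convergence or uniqueness issue remains beyond what is already supplied by the invariant manifold theorem, and the explicit values of the $p_i$ for $w_5$ are the output of this symbolic expansion, which is why we refer to \cite[Chapter 5]{Llor} for the full listing rather than reproducing it here.
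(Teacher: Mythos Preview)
Your proof is correct and follows essentially the same approach as the paper: set up the invariance equation $F_2(x,w(x))=w\bigl(F_1(x,w(x))\bigr)$ for the graph of the local unstable manifold and solve for the $w_k$ by matching Taylor coefficients order by order. Your write-up is in fact more detailed than the paper's, which simply states the invariance equation and says the result follows by comparing Taylor expansions.
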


 \begin{proof} Due to the particular form of the linear part of  $F$, the local unstable manifold $W^u_{\mathrm{loc}}(0,0)$ is given by
a smooth function of the form
$y=w(x)=w_2x^2+w_3x^3+w_4x^4+w_5x^5+O(x^6),$ that is, a  point is on
the local stable manifold if it is of the form
 $(x,w(x))$. Imposing that $F(x,w(x))=(F_1(x,w(x)),F_2(x,w(x)))$ is also on this curve we get  that
  the points on the local unstable manifold must satisfy
$
  F_2(x,w(x))=w\big(F_1(x,w(x))\big).
$
The result follows by comparing the terms in the Taylor development
of both members of the last equation.~\end{proof}

\subsection*{Acknowledgements}  The authors are supported by
Ministry of Economy, Industry and Competitiveness of the Spanish
Government through grants MINECO/FEDER MTM2016-77278-P  (first
author) and DPI2016-77407-P
 (AEI/FEDER, UE, second and third author). The first  author is also supported by the grant 2014-SGR-568  from
AGAUR,  Generalitat de Catalunya. The third author is supported by
the grant 2014-SGR-859 from AGAUR, Generalitat de Catalunya.

\end{document}